\begin{document}

\title[Bootstrapped zero density estimates]{Bootstrapped zero density estimates and a central limit theorem for the zeros of the zeta function}

\author{Kenneth Maples, Brad Rodgers}

\address{Institut f\"{u}r Mathematik, Universit\"{a}t Z\"{u}rich, Winterthurerstr. 190, CH-8057 Z\"{u}rich}
\email{kenneth.maples@math.uzh.ch, brad.rodgers@math.uzh.ch}

\maketitle

\theoremstyle{plain}

\newtheorem{theorem}{Theorem}[section]
\newtheorem{lemma}[theorem]{Lemma}
\newtheorem{prop}[theorem]{Proposition}
\newtheorem{corollary}[theorem]{Corollary}
\newtheorem{conj}[theorem]{Conjecture}
\newtheorem{dfn}[theorem]{Definition}

\newcommand\MOD{\textrm{ (mod }}
\newcommand\Var{\mathrm{Var}}
\newcommand\spann{\mathrm{span}}
\newcommand\E{\mathop{\vcenter{\hbox{\relsize{+2}$\mathbf{E}$}}}}
\newcommand\Prob{\mathop{\vcenter{\hbox{\relsize{+2}$\mathbf{P}$}}}}
\newcommand\Tr{\mathrm{Tr}}
\newcommand\Ss{\mathcal{S}}
\newcommand\Dd{\mathcal{D}}
\newcommand\Uu{\mathcal{U}}
\newcommand\supp{\mathrm{supp}\;}
\newcommand\sgn{\mathrm{sgn}}
\newcommand\bb{\mathbb}
\newcommand\pvint{-\!\!\!\!\!\!\!\int_{-\infty}^\infty}
\def\res{\mathop{\mathrm{Res}}}

\begin{abstract}
We unconditionally prove a central limit theorem for linear statistics of the zeros of the Riemann zeta function with diverging variance. Previously, theorems of this sort have been proved under the assumption of the Riemann hypothesis. The result mirrors central limit theorems in random matrix theory that have been proved by Szeg\H{o}, Spohn, and Soshnikov among others, and therefore provides support for the view that the zeros of the zeta function are distributed like the eigenvalues of a random matrix. 

A key ingredient in our proof is a simple bootstrapping of classical zero density estimates of Selberg and Jutila for the zeta function, which may be of independent interest.
\end{abstract}



\section{Introduction}
\label{1}

The purpose of this note is to unconditionally prove a central limit theorem for linear statistics  of the zeros of the Riemann zeta function. To denote the non-trivial zeros, we use the notation $1/2+i\gamma$. The Riemann Hypothesis is the statement that $\gamma$ is real for all zeros, but we do not assume it here, so that in what follows $\gamma$ may be complex. To be able to refer to the coordinates of zeros more directly, we also denote non-trivial zeros by $\beta_0+i\gamma_0$, where $\beta_0, \,\gamma_0\in \mathbb{R}$. A result dating back to Riemann's memoir states that the ordinates $\gamma_0$ occur with density roughly $\log T/2\pi$ near a large height $T$. More exactly, defining
\begin{equation}
\label{N_count}
N(T):= \#\{\gamma_0\in (0,T)\},
\end{equation}
\begin{theorem}[Riemann-von Mangoldt]
\label{RvM}
$$
N(T) = \frac{T}{2\pi}\log \frac{T}{2\pi} - \frac{T}{2\pi} + O(\log T).
$$
\end{theorem}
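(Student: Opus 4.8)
The plan is the classical one: count zeros of the completed zeta function via the argument principle, extract the main term from Stirling's formula, and bound the contribution of $\zeta$ itself by $O(\log T)$.

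First I would replace $\zeta$ by the entire function $\xi(s):=\tfrac12 s(s-1)\pi^{-s/2}\Gamma(s/2)\zeta(s)$, whose zeros are exactly the non-trivial zeros $\beta_0+i\gamma_0$ of $\zeta$ (the factor $s-1$ cancels the pole at $s=1$, and the poles of $\Gamma(s/2)$ cancel the trivial zeros), and which satisfies the functional equation $\xi(s)=\xi(1-s)$ together with the reflection identity $\overline{\xi(s)}=\xi(\bar s)$; in particular $\xi$ is real and positive on the real segment $[\tfrac12,2]$. Assume for now that $T$ is not the ordinate of a zero (the general case follows by a limiting argument). Let $R$ be the positively oriented rectangle with vertices $-1,\,2,\,2+iT,\,-1+iT$. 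Since $\xi$ has no zeros on $\partial R$ and every zero with $0<\gamma_0<T$ lies in its interior, the argument principle gives $N(T)=\tfrac{1}{2\pi}\,\Delta_R\arg\xi(s)$, where $\Delta_R$ is the net change of $\arg\xi$ along $\partial R$.

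Next I would use the two symmetries of $\xi$ to reduce this to $N(T)=\tfrac1\pi\,\Delta_L\arg\xi(s)$, where $L$ is the broken line running from $2$ up to $2+iT$ and then left to $\tfrac12+iT$, and split
\[
\log\xi(s)=\log\!\big(\tfrac12 s(s-1)\big)-\tfrac{s}{2}\log\pi+\log\Gamma\!\big(\tfrac s2\big)+\log\zeta(s).
\]
The first summand contributes $O(1)$ to $\Delta_L\arg$, the second contributes $-\tfrac T2\log\pi$, and Stirling's formula applied along $L$ gives $\Delta_L\arg\Gamma(s/2)=\tfrac T2\log\tfrac T2-\tfrac T2+O(1)$; collecting these and dividing appropriately recovers the main term displayed in the theorem. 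It then remains only to show that the remaining piece $S(T):=\tfrac1\pi\,\Delta_L\arg\zeta(s)$ satisfies $S(T)=O(\log T)$ — the one genuinely delicate point of the argument.

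To bound $\Delta_L\arg\zeta$ I would note that along the vertical segment from $2$ to $2+iT$ the change is $O(1)$, since $\mathrm{Re}\,\zeta(2+it)\ge 1-\sum_{n\ge2}n^{-2}>0$ keeps $\arg\zeta$ in a fixed interval there. Along the horizontal segment from $2+iT$ to $\tfrac12+iT$, $\arg\zeta$ can change by at most $\pi$ more than the number of zeros of $\mathrm{Re}\,\zeta(\sigma+iT)$ for $\sigma\in[\tfrac12,2]$; and since $\mathrm{Re}\,\zeta(\sigma+iT)=g(\sigma)$ for real $\sigma$, where $g(s):=\tfrac12\big(\zeta(s+iT)+\zeta(s-iT)\big)$ is analytic (using $\overline{\zeta(s)}=\zeta(\bar s)$), this is at most the number of zeros of $g$ in the disk centered at $2$ of radius $\tfrac32$. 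Applying Jensen's formula to $g$ on that disk — whose center satisfies $|g(2)|=\mathrm{Re}\,\zeta(2+iT)\gg 1$ — and comparing with $|g|$ on a slightly larger concentric disk, where the convexity (or even trivial) bound $\zeta(\sigma+it)\ll|t|^{A}$ for $\sigma\ge-1$ and $|t|$ large gives $\log|g|\ll\log T$, shows that $g$ has $O(\log T)$ zeros in the disk. Hence $\Delta_L\arg\zeta=O(\log T)$, which finishes the proof. The main obstacle is precisely this Jensen's-formula estimate for the number of sign changes of $\mathrm{Re}\,\zeta(\sigma+iT)$; the rest is routine manipulation of the argument principle and Stirling's formula.
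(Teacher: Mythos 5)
The paper states Theorem~\ref{RvM} without proof, treating it as a classical result; the related identity \eqref{SofT} defining $S(T)$ is cited to Montgomery--Vaughan, so there is no in-paper argument to compare against. Your proposal is precisely the standard argument-principle proof: pass to the entire function $\xi$, reduce by the symmetries $\xi(s)=\xi(1-s)$ and $\overline{\xi(s)}=\xi(\bar s)$ to the broken line $L$ from $2$ to $2+iT$ to $\tfrac12+iT$, extract the main term from Stirling applied to $\log\Gamma(s/2)$ together with the $-\tfrac{s}{2}\log\pi$ factor, and control $S(T)=\tfrac1\pi\Delta_L\arg\zeta$ by counting sign changes of $\Re\zeta(\sigma+iT)$ on $[\tfrac12,2]$ via Jensen's formula applied to $g(s)=\tfrac12(\zeta(s+iT)+\zeta(s-iT))$, using that $|g(2)|\gg 1$ and $\log|g|\ll\log T$ on a slightly larger disk. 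This is correct, and it is the proof found in Titchmarsh and in Montgomery--Vaughan; your identification of the Jensen-formula step as the one genuinely delicate point is accurate.
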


Our main result is a more precise characterization of the distribution of zeros:

\begin{theorem}
\label{mainCLT} 
Let $n(T)$ be any fixed function tending to infinity as $T\rightarrow\infty$ in such a way that $n(T) = o(\log T)$, and let $t$ be a random variable uniformly distributed on the interval $[T,2T]$. Let $\eta$ be a fixed real valued function with compact support and bounded variation. Define the count $\Delta_\eta$ by
$$
\Delta_\eta = \Delta_\eta(t,T):= \sum_\gamma \eta\Big(\frac{\log T}{2\pi n(T)}(\gamma_0-t)\Big),
$$where the sum is over all zeros $\gamma$, counted with multiplicity. So long as $\int |x||\hat{\eta}(x)|^2\,dx$ diverges, we have the following:
\begin{equation}
\label{expectation}
\mathbb{E} \Delta_\eta = n(T)\int_\mathbb{R} \eta(y)\,dy + o(1),
\end{equation}
\begin{equation}
\label{variance}
\Var\; \Delta_\eta \sim \int_{-n(T)}^{n(T)} |x||\hat{\eta}(x)|^2\,dx,
\end{equation}
and in distribution,
\begin{equation}
\label{normal}
\frac{\Delta_\eta - \mathbb{E}\Delta_\eta}{\sqrt{\Var\;\Delta_\eta}} \Rightarrow \mathcal{N}(0,1)
\end{equation}
as $T\rightarrow\infty$, where $\mathcal{N}(0,1)$ is a standard normal random variable.
\end{theorem}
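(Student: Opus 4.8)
The plan is to mimic, for the zeta function, the proof of central limit theorems of this kind in random matrix theory (Diaconis--Shahshahani, Soshnikov): express $\Delta_\eta$, up to a deterministic main term, as a short Dirichlet polynomial supported on prime powers, and then show by the method of moments that this polynomial is asymptotically Gaussian, with all odd and higher cumulants negligible against the diverging variance. The first move is to pass to the argument of zeta. Theorem~\ref{RvM}, in the sharper form $N(u) = \frac{u}{2\pi}\log\frac{u}{2\pi} - \frac{u}{2\pi} + \frac78 + S(u) + O(1/u)$ with $S(u) = \frac1\pi\arg\zeta(\tfrac12 + iu)$, holds unconditionally and counts zeros by imaginary part $\gamma_0$ --- exactly the coordinate appearing in $\Delta_\eta$. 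Writing $\Delta_\eta = \int \eta\big(\tfrac{\log T}{2\pi n(T)}(u - t)\big)\,dN(u)$ as a Riemann--Stieltjes integral and integrating by parts, the smooth part of $N$ produces $n(T)\int_{\mathbb R}\eta(y)\,dy$ up to a negligible error, and the problem reduces to establishing \eqref{variance} and \eqref{normal} for the fluctuation
\[
\Sigma = \Sigma(t,T) := -\int_{\mathbb R} S(u)\,d_u\!\Big[\eta\Big(\tfrac{\log T}{2\pi n(T)}(u - t)\Big)\Big],
\]
the contribution of the $O(1/u)$ term being harmless.

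Next I invoke Selberg's approximation of $S$: for $x = T^{\theta}$ with $\theta > 0$ a suitably small constant (which in the end we will allow to depend on the moment being computed), $S(u) = S_x(u) + E_x(u)$, where $S_x(u) = -\tfrac1\pi\Im\sum_{2\le m\le x^3}\tfrac{\Lambda_x(m)}{\sqrt m\,\log m}\,m^{-iu}$, with $\Lambda_x$ Selberg's tapered von Mangoldt weight (equal to $\Lambda$ on $[2,x]$), and the error $E_x(u)$ is governed by the zeros $\rho=\beta_0+i\gamma_0$ of $\zeta$ with $|\gamma_0-u|\le 1$. Substituting $S_x$ for $S$ and performing the $u$-integration --- unproblematic since $\eta$ has bounded variation --- turns $\Sigma$ into a Dirichlet polynomial $\Sigma_x = \Re\sum_{2\le m\le x^3} a_m\, m^{-it}$ with $a_m = a_m(T,\eta)$ equal, up to fixed constants, to $\tfrac{n(T)}{\log T}\cdot\tfrac{\Lambda_x(m)}{\sqrt m}\,\hat\eta\big(\tfrac{n(T)\log m}{\log T}\big)$. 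Since $\theta$ is fixed the Fourier modes occurring here run up to $\asymp n(T)$ --- the truncation in \eqref{variance} --- so no mass is lost, and the hypothesis that $\int|x||\hat\eta(x)|^2\,dx$ diverges translates into $\Var\Sigma_x\to\infty$.

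The main obstacle is to show, unconditionally, that $\Sigma - \Sigma_x$ is negligible not just in mean square but in every moment, $\mathbb{E}_t|\Sigma - \Sigma_x|^{2k} = o\big((\Var\Delta_\eta)^k\big)$ for each fixed $k$ --- a tight demand, since $\Var\Delta_\eta \asymp \log n(T)$ may tend to infinity arbitrarily slowly. Equivalently one needs moment control on $E_x(u)$, averaged over $u$ in a short interval about $t$, hence on the number of zeros --- including any zeros off the critical line --- in such intervals. This is precisely where the Riemann hypothesis is avoided: instead of pair-correlation input we feed the classical zero-density estimates of Selberg and Jutila back into the contour-integral representation of $E_x$ and iterate, \emph{bootstrapping} them into the sharper form these moment bounds require. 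This is the ingredient flagged in the abstract, and I expect it to be the technically heaviest step.

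It then remains to compute the moments of $\Sigma_x = \Re\sum_m a_m m^{-it}$ over $t$ uniform on $[T,2T]$. Since $\mathbb{E}_t[m^{-it}]\ll (T\log m)^{-1}$ the mean is $o(1)$, which with the previous steps yields \eqref{expectation}; since $\mathbb{E}_t[(m/m')^{it}] = \mathbf{1}_{m=m'} + O\big((T|\log(m/m')|)^{-1}\big)$, one gets $\Var\Sigma_x \sim \tfrac12\sum_m|a_m|^2$, and isolating the prime terms $m=p$ (the prime powers are of lower order) and applying a Mertens-type partial summation identifies $\tfrac12\sum_p|a_p|^2$ with $\int_{-n(T)}^{n(T)}|x||\hat\eta(x)|^2\,dx$, giving \eqref{variance}. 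For the $2k$-th moment, $\mathbb{E}_t\big[\prod_{j=1}^{2k} m_j^{\pm it}\big]$ is negligible unless $\prod_j m_j^{\pm 1}=1$; among such terms the perfect pairings $m_j=m_{j'}$ dominate and reproduce exactly the Wick formula times $(\Var\Sigma_x)^k$, while the remaining diagonal solutions and all odd moments are $o\big((\Var\Sigma_x)^k\big)$ --- here the divergence hypothesis, i.e.\ $\Var\Sigma_x\to\infty$, is what forces these terms to be subdominant. By the method of moments the normalized $\Sigma_x$, hence $\Delta_\eta$, converges in distribution to $\mathcal N(0,1)$, which is \eqref{normal}.
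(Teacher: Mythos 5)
You take a genuinely different route from the paper. The paper works directly with the Riemann--Weil explicit formula, where the sum runs over all zeros $\gamma = \gamma_0 - i A_\gamma/\log T$ in the complex form, and the central unconditional difficulty is that $A_\gamma$ may be nonzero. The paper's bootstrapped estimate (Proposition~\ref{Lkzero_ex}/Lemma~\ref{Lkzerodens}) is used precisely in Step 2 (Lemma~\ref{Step2_2}) to show that replacing $\gamma_0 - iA_\gamma/\log T$ by $\gamma_0$ in the explicit-formula sum costs only $O(1)$ in every $L^k(dt)$ norm; this is the sole point where the bootstrapping enters. Your proposal instead passes through $S(t) = \tfrac1\pi \arg\zeta(\tfrac12+it)$ via \eqref{SofT}, which already counts zeros by the real ordinate $\gamma_0$, so the complex-$\gamma$ issue never arises in your decomposition. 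You then approximate $S$ by the Selberg--Fujii Dirichlet polynomial $S_x$ and run the method of moments on $\Sigma_x$. This is viable and is essentially a generalization, to test functions $\eta$ of bounded variation, of Fujii's original proof of Theorem~\ref{Fuj1} --- which is itself cited and used (as an upper bound) in the paper's Section~\ref{4}.

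The weak point of your sketch is that you identify ``bootstrapping Selberg--Jutila zero densities'' as the missing ingredient controlling $E_x = S - S_x$, but this is a misattribution. The moment bounds $\int_T^{2T}|E_x(t)|^{2k}\,dt \ll_k T$ (for $x$ a small power of $T$) are exactly the content of Selberg's classical Lemma in \cite{Se2}, already proved unconditionally with zero-density input built in; Fujii's Theorem~\ref{Fuj1} is deduced from it. What you actually need, and what you do not establish or reduce to a known result, is a localized version of this control: a bound of the form
\[
\int_T^{2T} \Big|\int E_x(u)\,d\mu_t(u)\Big|^{2k}\,dt = o\big(T\,(\Var\Delta_\eta)^k\big),
\]
where $d\mu_t(u) = d_u[\eta(\tfrac{\log T}{2\pi n(T)}(u - t))]$ is a signed measure concentrated in a window of length $\asymp n(T)/\log T$ about $t$. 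This is a stronger statement than the global moment bound for $E_x$, because you must beat $(\Var\Delta_\eta)^k \asymp (\log n(T))^k$, which may grow arbitrarily slowly. It is at this point that a short-interval, $L^k$-type zero density estimate, in the spirit of (but not identical to) the paper's Lemma~\ref{Lkzerodens}, is genuinely needed. So the proposal is not wrong, but the paragraph flagging the bootstrapping as ``the technically heaviest step'' does not actually contain the step: you neither derive the localized bound nor explain how the $L^1$ Selberg--Jutila estimate of Theorem~\ref{SelbergJutila} would be upgraded. Two further small points: the Dirichlet polynomial $\Sigma_x$ truncates the Fourier modes at $3\theta\,n(T)$ rather than at $n(T)$ (you write ``$\asymp n(T)$''), and only the divergence hypothesis plus the bounded-variation decay $\hat\eta(x)\ll 1/|x|$ lets you identify this with $\int_{-n(T)}^{n(T)}|x||\hat\eta(x)|^2\,dx$; and the passage from the expectation formula $\mathbb{E}\Delta_\eta = n(T)\int\eta + o(1)$ to the $\Omega$-integral requires an argument parallel to the paper's Lemma~\ref{Step4}, which your ``negligible error'' claim quietly subsumes.

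What each approach buys: yours avoids the explicit-formula machinery, the truncation kernel $\check{K}_{n(T)}\ast\eta$, and the smoothed averages $\sigma(t/T)$, at the cost of heavier reliance on Selberg's $\Lambda_x$-apparatus and the need for a localized $E_x$ moment bound that would have to be established with care. The paper's route is more modular --- the explicit formula cleanly separates the Dirichlet-polynomial side from the zero side, and its bootstrapping lemma is a self-contained statement of independent interest. Both ultimately rest on the same two classical pillars (Selberg--Jutila zero density, Fujii moments of $S$), deployed differently.
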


\textit{Remark:} To avoid confusion, we emphasize that $N(T)$ and $n(T)$ are different functions which play different roles in the sequel. The former is a particular function defined by the relation \eqref{N_count}, while the latter may be \emph{any} function that meets the requirements of the theorem.

Under the assumption of the Riemann Hypothesis this result was proved independently in \cite{BoKu} and \cite{Ro}. In the present paper, we remove this assumption from its proof.

It is worth noting that the condition that $\int |x||\hat{\eta}(x)|^2\,dx$ diverge should not be necessary for the conclusion the statement of of Theorem \ref{mainCLT}, and under the assumption of the Riemann Hypothesis, results of this sort are proved in both \cite{BoKu} and \cite{Ro}.

We want also to note that we have recently found out that in work forthcoming, P. Bourgade, J. Kuan, and M. Radziwi\l\l\,  have independently removed the assumption of the Riemann Hypothesis from the proof of central limit theorems of this sort, and indeed have been in possession of such results since this summer (private communication). They deserve priority for an unconditional proof for this reason. The two techniques for making the proof unconditional, however, differ considerably. In particular the bootstrapping of short interval zero density estimates we make use of here is of interest independent of Theorem \ref{mainCLT}.

\vspace{2mm}

\textit{Motivating remarks:} We note that setting $\eta = \mathbf{1}_{[0,1]}$ in Theorem \ref{mainCLT} recovers a classical central limit theorem of A. Fujii (also proved uncondtionally) \cite{Fu, Fu2}. In this case $\Delta$ is a count of the number of $\gamma_0$ that lie in the interval $[t, t + 2\pi n(T)/\log T)$:
$$
\Delta_{\mathbf{1}_{[0,1]}} = N(t + \tfrac{2\pi n(T)}{\log T}) - N(t),
$$
while
$$
\mathbb{E} \Delta_{\mathbf{1}_{[0,1]}} = n(T) + o(1)
$$
$$
\Var\; \Delta_{\mathbf{1}_{[0,1]}} \sim \frac{1}{\pi^2} \log n(T).
$$

Results such as this and Theorem \ref{mainCLT} are referred to as \emph{mesoscopic}. This means that they concern collections of consecutive zeros that are expected to contain more and more zeros as their height $T$ increases, but collections whose expected number of zeros is $o(\log T)$; note that $n(T)$ plays the role of the expected number of zeros. Statements about collections of consecutive zeros that are essentially bounded in number are known as \emph{microscopic}, while statements about collections that grow in expected number like $\log T$ or faster are known as \emph{macroscopic}\footnote{In the macroscopic setting, by the density estimate of Theorem \ref{RvM}, no ``zooming in" is required to see such a collection of zeros.}. 

That the variance grows much more slowly than the expectation in Fujii's result and Theorem \ref{mainCLT} is indicative of a rigidity in the distribution of zeros at the mesoscopic scale. Indeed, in the mesoscopic regime, zeros of the zeta function are expected to exhibit universalitity in that they statistically resemble the eigenvalues of a random matrix \cite{Be}. The more general linear statistics of Theorem \eqref{mainCLT} are a matter of long standing interest in random matrix theory, dating back to a central limit theorem proved by Szeg\H{o} for the eigenvalues of a random unitary matrix (the Strong Szeg\H{o} theorem) \cite{Sz}. In this connection, see also \cite{DiEv, So1, Sp}. Theorem \eqref{mainCLT} mirrors exactly these results, and therefore provides support for the view that mesoscopically the distribution of zeros is modeled accurately by random matrix theory. Further motivating discussion may be found in the aforementioned papers \cite{BoKu, Ro}.

\vspace{2mm}

\textbf{Notation:} We follow standard conventions of analytic number theory, so that $e(x) = e^{i2\pi x}$, the Fourier transform of a function is $\hat{f}(\xi) = \int e(-x \xi) f(x)\,dx$ and the inverse Fourier transform is $\check{g}(x) = \int e(x \xi)g(\xi)\,d\xi.$ In these formula, we allow $\xi$ and $x$ to be complex numbers, provided the integrand remains integrable. In particular, we follow the convention if $f:\mathbb{R}\rightarrow\mathbb{C}$ has a Fourier transform that is compactly supported, we may extend $f$ harmonically to a function defined on $\mathbb{C}$ with $f(x+iy) = \int e((x+iy)\xi) \hat{f}(\xi)\,d\xi.$ Convolution is denoted by $f\ast g(x) = \int f(y) g(x-y)\,dy.$ We use the notation\footnote{Because we will several times reference the argument in \cite{Ro}, we note that in that paper the symbol $\lesssim$ is used in place of the symbol $\ll$ used here.} $f(x) \ll g(x)$ and $f(x) = O(g(x))$ interchangeably to mean there is a constant $C$ not depending on $x$ so that $f(x) \leq Cg(x)$. $|f(x)| \ll_A g(x)$ and $f(x) = O_A(g(x))$ mean that the constant $C$ may depend on $A$.

\section{Main tools}
\label{2}

Our proof proceeds by modifying the argument of \cite{Ro}. We outline the main tools and necessary ideas here.

As in almost all results of this sort, we will make use of the explicit formula, due in varying stages of generality to Riemann \cite{Ri}, Guinand \cite{Gu}, and Weil \cite{We}, relating the zeros of the zeta function to the primes.

\begin{theorem}[The explicit formula]
\label{explicit}
For a continuous and compactly supported function $g$,
$$
\lim_{V\rightarrow\infty} \sum_{|\gamma|< V}\hat{g}\Big(\frac{\gamma}{2\pi}\Big) - \int_{-V}^V \hat{g}\Big(\frac{\xi}{2\pi}\Big) \frac{\Omega(\xi)}{2\pi}\,d\xi = \int_{-\infty}^\infty (g(x) + g(-x)) e^{-x/2} d\big(e^x - \psi(e^x)\big),
$$
where
$$
\psi(x):= \sum_{n\leq x} \Lambda(n),
$$
with $\Lambda$ the von Mangoldt function, and 
$$
\Omega(\xi) := \frac{1}{2}\frac{\Gamma'}{\Gamma}\Big(\frac{1}{4}+i\frac{\xi}{2}\Big) +\frac{1}{2}\frac{\Gamma'}{\Gamma}\Big(\frac{1}{4}-i\frac{\xi}{2}\Big) - \log \pi.
$$
\end{theorem}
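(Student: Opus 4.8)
The plan is to run the standard contour--integral derivation of the Riemann--Weil explicit formula, built from the Dirichlet series $-\zeta'/\zeta(s)=\sum_n\Lambda(n)n^{-s}$, the functional equation of $\zeta$, and the simple pole at $s=1$. I would first reduce to the case that $g$ is smooth and compactly supported, say with $\supp g\subset[-R,R]$, deferring the general continuous compactly supported case to the end. For such $g$ the transform $\Phi(s):=\int_{\mathbb R}g(x)e^{(1/2-s)x}\,dx$ is entire, of exponential type $R$ in the imaginary direction, rapidly decreasing along vertical lines of bounded real part, and satisfies $\Phi(1/2+i\xi)=\hat g(\xi/2\pi)$, $\Phi(1)=\int g(x)e^{-x/2}\,dx$, $\Phi(0)=\int g(x)e^{x/2}\,dx$; note also that the prime side $\sum_n\frac{\Lambda(n)}{\sqrt n}(g(\log n)+g(-\log n))$ is then a finite sum.

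The heart of the proof is a short sequence of contour shifts. Fix $c$ with $1<c<2$. On $\mathrm{Re}\,s=c$ one has $\frac1{2\pi i}\int_{(c)}\bigl(-\tfrac{\zeta'}{\zeta}(s)\bigr)\Phi(s)\,ds=\sum_n\frac{\Lambda(n)}{\sqrt n}g(-\log n)$ by term-by-term Fourier inversion. Shifting the contour to $\mathrm{Re}\,s=1-c$ passes the pole of $\zeta$ at $s=1$, contributing $\Phi(1)$, and the nontrivial zeros $\rho=\beta_0+i\gamma_0$, contributing $-\sum_\rho m_\rho\Phi(\rho)=-\sum_\gamma\hat g(\gamma/2\pi)$ (no trivial zeros lie in the strip for this $c$). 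On the line $\mathrm{Re}\,s=1-c$ I would substitute the functional equation in the form $\zeta'/\zeta(s)=\chi'/\chi(s)-\zeta'/\zeta(1-s)$, with $\chi(s)=\pi^{s-1/2}\Gamma((1-s)/2)/\Gamma(s/2)$: the change of variables $s\mapsto 1-s$ turns the $\zeta'/\zeta(1-s)$ term into $-\sum_n\frac{\Lambda(n)}{\sqrt n}g(\log n)$, while shifting the $-\chi'/\chi(s)$ term to the critical line, which crosses the zero of $\chi$ at $s=0$ and so contributes $\Phi(0)$, produces $\frac1{2\pi i}\int_{(1/2)}\bigl(-\tfrac{\chi'}{\chi}(s)\bigr)\Phi(s)\,ds=\int\hat g(\xi/2\pi)\frac{\Omega(\xi)}{2\pi}\,d\xi$, precisely because $-\chi'/\chi(1/2+i\xi)=\Omega(\xi)$. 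Collecting all the pieces yields $\sum_\gamma\hat g(\gamma/2\pi)-\int\hat g(\xi/2\pi)\frac{\Omega(\xi)}{2\pi}\,d\xi=\Phi(1)+\Phi(0)-\sum_n\frac{\Lambda(n)}{\sqrt n}(g(\log n)+g(-\log n))$, and since $\Phi(1)+\Phi(0)=\int(g(x)+g(-x))e^{x/2}\,dx$ the right side is exactly $\int(g(x)+g(-x))e^{-x/2}\,d(e^x-\psi(e^x))$.

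The main obstacle --- and the reason the statement is phrased with a symmetric truncation and a limit rather than as an absolutely convergent identity --- is the rigorous justification of these contour shifts, i.e. showing the horizontal segments do not contribute. I would truncate every contour at height $|\mathrm{Im}\,s|=V$, choosing $V$ to stay at distance $\gg 1/\log V$ from every ordinate $\gamma_0$, and combine the rapid decay of $\Phi$ on horizontal lines with the standard estimate $\zeta'/\zeta(\sigma+iV)=\sum_{|\gamma_0-V|\le1}(s-\rho)^{-1}+O(\log V)$ uniform for $\sigma$ in a fixed bounded interval, together with the zero count of Theorem~\ref{RvM}; this forces the horizontal pieces to $0$ along a suitable sequence $V\to\infty$, but only after the sum over zeros and the $\Omega$-integral have both been truncated at the common height $V$, which is exactly why they must be carried as the single paired quantity appearing on the left. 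Finally, to pass from smooth $g$ to merely continuous compactly supported $g$, I would approximate $g$ uniformly by smooth functions supported in a fixed interval: the prime side and the $\Phi(0),\Phi(1)$ terms are controlled trivially by the sup-norm, and the paired left-hand side is controlled uniformly by the same local bound for $\zeta'/\zeta$ and Theorem~\ref{RvM}, so the identity passes to the limit.
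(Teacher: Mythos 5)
The paper does not actually prove Theorem~\ref{explicit}; it only cites standard references (Montgomery--Vaughan, Iwaniec--Kowalski) for it. Your contour-shift derivation is precisely the classical argument found in those references, and the core of it is correct. I checked the residues and signs: shifting $\frac{1}{2\pi i}\int_{(c)}(-\zeta'/\zeta)\Phi\,ds$ to $\Re s = 1-c$ picks up $+\Phi(1)$ from the pole of $\zeta$ at $s=1$ and $-\sum_\rho m_\rho\Phi(\rho)$ from the nontrivial zeros (no trivial zeros in the strip for $1<c<2$); the substitution $s\mapsto 1-s$ on $\Re s = 1-c$ gives back the prime sum with $g(\log n)$; shifting the $-\chi'/\chi$ piece to the critical line picks up $+\Phi(0)$ from the zero of $\chi$ at $s=0$; and $-\chi'/\chi(1/2+i\xi)=\Omega(\xi)$ indeed matches the definition in the statement. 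Your final bookkeeping $\Phi(0)+\Phi(1)-\sum_n\frac{\Lambda(n)}{\sqrt n}(g(\log n)+g(-\log n))$ equals the Stieltjes integral on the right side (your written $e^{x/2}$ versus the paper's $e^{-x/2}$ is immaterial after the change of variable $x\mapsto -x$, so this is not an error).

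The one place you are genuinely hand-waving is the final passage from smooth to merely continuous $g$. For smooth $g$ the paired truncation is unnecessary --- both the zero sum and the $\Omega$-integral converge absolutely because $\hat g$ decays faster than any polynomial --- so the symmetric limit in the statement is really there \emph{only} for the less regular $g$. Your argument that ``the paired left-hand side is controlled uniformly by the same local bound for $\zeta'/\zeta$ and Theorem~\ref{RvM}'' is not yet a proof: per dyadic window the paired sum for $h=g-g_n$ is only $O(\log V)$, and summing over windows diverges. To make this work one typically integrates by parts against $S(T)$ using $dN(\xi)=\frac{\Omega(\xi)}{2\pi}d\xi + dS(\xi)$, exploiting that $S(T)$ has bounded average (or, equivalently, that $\int_0^T S = O(\log T)$) together with the bound $|\hat h'(\xi)|\ll R^2\|h\|_\infty$ coming from the fixed compact support. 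There is also a small point you did not address: without RH the summand $\hat h(\gamma/2\pi)$ is evaluated at complex arguments $\gamma=\gamma_0-i(\beta_0-1/2)$, and the naive bound $|\hat h(\gamma/2\pi)-\hat h(\gamma_0/2\pi)|\ll\|h\|_\infty$ per zero is not summable; controlling this uniformly in $n$ requires a zero-density input such as Theorem~\ref{SelbergJutila}. Since the paper simply cites this theorem, the most economical fix on your end would be to do the same, or else to spell out the $S(T)$-integration-by-parts step and the off-line-zero estimate explicitly.
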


\textit{Remark:} Note that, by Stirling's formula,
\begin{equation}
\label{Stirlings}
\frac{\Omega(\xi)}{2\pi} = \frac{\log\big((|\xi|+2)/2\pi\big)}{2\pi} + O\Big(\frac{1}{|\xi|+2}\Big),
\end{equation}
so that this term corresponds to an approximation of the density of zeros near height $\xi$. On the other hand,
$$
\int_{-\infty}^\infty g(x) e^{-x/2}d\big(e^x - \psi(e^x)\big) = \int_0^\infty \frac{g(\log t)}{\sqrt{t}}\,dt - \sum_{n=1}^\infty \frac{g(\log n)}{\sqrt{n}}\Lambda(n),
$$
with the term $\int g(\log t)/\sqrt{t}\,dt$ an approximation to $\sum g(\log n)\Lambda(n)/\sqrt{n}$.

\vspace{2mm}

A proof of the explicit formula may be found in several places, for instance \cite[pp. 410-416]{MoVa} or \cite[pp. 108-109]{IwKo}.

\vspace{2mm}

In order to cite a result of Fujii, we require the function $S(t)$ defined by the relation,
$$
S(t) = \frac{1}{\pi}\arg\zeta(1/2+it),
$$
where as long as $t$ is not the height of a zero, the argument is defined by continuity along a rectangle beginning at 2, passing to $2+it$, and then to $1/2+it$. If $t$ is the height of a zero, $S(t)$ is defined by lower semicontinuity. 

One may also equivalently define the function $S(T)$ by the relation
\begin{equation}
\label{SofT}
N(T) = \int_0^T \frac{\Omega(\xi)}{2\pi} \,d\xi + 1 + S(T),
\end{equation}
so that $S(T)$ is an error term of the zero counting function $N(T)$  \cite[pp. 452]{MoVa}.

\begin{theorem}[Fujii]
\label{Fuj1} 
For fixed $a>0$, and $T^{1/2+a} \leq H \leq T$, $0\leq h \leq  1,$
$$
\int_{T}^{T+H} (S(t+h)-S(t))^{2k}\,dt = \frac{c_{2k}}{\pi^{2k}} H \log^{2k}(2+h\log T)\Big(1+ O_k\big(\log^{-1/2}(2+ h\log T)\big)\Big),
$$
where $c_{2k}:= (2k-1)(2k-3)\cdots 3 \cdot 1$ are the moments of a standard normal random variable.
\end{theorem}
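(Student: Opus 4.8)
The plan is to follow Selberg's method for the moments of $S(t)$: approximate $S(t+h)-S(t)$ by a Dirichlet polynomial over primes, compute its moments by extracting the diagonal, and control the error of the approximation in $2k$-th mean over $[T,T+H]$.

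\textbf{Step 1 (prime approximation).} From $\pi S(t)=-\mathrm{Im}\,\log\zeta(1/2+it)$ together with $\log\zeta(s)=\sum_{n}\Lambda(n)/(n^{s}\log n)$ for $\mathrm{Re}(s)>1$ --- or, alternatively, via a suitable application of the explicit formula (Theorem~\ref{explicit}) with a smoothed, truncated test function --- one obtains, for a truncation parameter $x=x(T,H,h)\ge 2$ to be chosen, a representation
\[
S(t)\;=\;-\frac{1}{\pi}\,\mathrm{Im}\sum_{n\le x^{2}}\frac{\Lambda_{x}(n)}{n^{1/2+it}\log n}\;+\;E_{x}(t),
\]
where $\Lambda_{x}$ is a smoothed von Mangoldt weight agreeing with $\Lambda(n)$ for $n\le x$ and $E_{x}(t)$ is an error term whose size is governed by the zeros of $\zeta$ with real part near $1$. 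Subtracting the identity at $t$ and at $t+h$,
\[
S(t+h)-S(t)\;=\;\mathcal{P}(t)\;+\;\big(E_{x}(t+h)-E_{x}(t)\big),\qquad
\mathcal{P}(t):=-\frac{1}{\pi}\,\mathrm{Im}\sum_{n\le x^{2}}\frac{\Lambda_{x}(n)}{n^{1/2}\log n}\cdot\frac{n^{-ih}-1}{n^{it}}.
\]

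\textbf{Step 2 (controlling the error --- the main obstacle).} The crux is the mean-value bound $\int_{T}^{T+H}|E_{x}(t)|^{2k}\,dt\ll_{k}H$, holding uniformly for $x$ in an admissible range. Granting it, Minkowski's inequality shows $E_{x}(t+h)-E_{x}(t)$ contributes $O_{k}(H)$ to the $2k$-th moment, and its cross terms with $\mathcal{P}(t)$ in the expansion of $\big(\mathcal{P}(t)+(E_{x}(t+h)-E_{x}(t))\big)^{2k}$ are exactly of the order of the relative error $O_{k}\!\big(\log^{-1/2}(2+h\log T)\big)$ claimed. Unconditionally, this bound demands zero-density information in the \emph{short} interval $[T,T+H]$ --- estimates controlling the number of zeros $\beta_{0}+i\gamma_{0}$ with $\gamma_{0}\in[T,T+H]$ and $\beta_{0}>1/2+\delta$ --- which is precisely what the classical density estimates of Selberg and Jutila supply; the hypotheses $H\ge T^{1/2+a}$ and $h\le H-(H/\sqrt{T})^{1/8}$ are what make those estimates applicable and permit choosing $x$ as a small power of $H$. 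This is where the entire unconditional difficulty lies: under the Riemann Hypothesis, $E_{x}(t)$ is pointwise small and Step~2 is immediate.

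\textbf{Step 3 (the diagonal moment computation).} It remains to evaluate $\int_{T}^{T+H}\mathcal{P}(t)^{2k}\,dt$. Writing $\mathcal{P}(t)=\tfrac{i}{2\pi}\sum_{n}\big(a(n)\,n^{-it}-\overline{a(n)}\,n^{it}\big)$ with $a(n)=\Lambda_{x}(n)(n^{-ih}-1)/(n^{1/2}\log n)$, expand the $2k$-th power and integrate term by term via $\int_{T}^{T+H}(N/M)^{it}\,dt=H$ if $M=N$ and $\ll 1/|\log(M/N)|$ otherwise. On the diagonal $\prod M=\prod N$ the $2k$ indices must pair up, and since $\Lambda_{x}$ is supported on prime powers, with genuine higher prime powers contributing only $O(1)$ to the sums that arise, the dominant contribution comes from perfect pairings of distinct primes and equals
\[
\int_{T}^{T+H}\mathcal{P}(t)^{2k}\,dt\;=\;\frac{c_{2k}}{\pi^{2k}}\,H\!\left(\sum_{p\le x^{2}}\frac{2-2\cos(h\log p)}{2p}\right)^{\!k}+\big(\text{smaller terms}\big),
\]
with $c_{2k}=(2k-1)!!$ --- the number of perfect matchings of $2k$ objects --- emerging from the diagonal combinatorics. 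Finally, a Mertens-type estimate evaluating $\sum_{p}\cos(h\log p)/p=\mathrm{Re}\,\log\zeta(1-ih)+O(1)$ through the pole of $\zeta$ at $s=1$ identifies $\sum_{p\le x^{2}}\big(1-\cos(h\log p)\big)/p$ with $\log(2+h\log T)+O(1)$ in the relevant range of $h$ (recall $x$ is a power of $H\le T$), which matches the asserted main term; the off-diagonal contributions, the prime-power terms, the truncation discrepancy, and the $E_{x}$ cross terms are all absorbed into $O_{k}\!\big(\log^{-1/2}(2+h\log T)\big)$.
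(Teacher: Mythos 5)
The paper does not prove this theorem; it is cited verbatim as the Main Theorem of Fujii \cite{Fu}, so there is no in-paper argument to compare against. Your outline is a correct high-level account of the Selberg--Fujii method used in that source: Selberg's formula expressing $S(t)$ as a prime Dirichlet polynomial plus an error term controlled by zeros near the half-line, a $2k$-th mean bound on the error via a short-interval zero-density estimate (this is where the hypothesis $H \geq T^{1/2+a}$ is used), a diagonal evaluation of the polynomial moment with $c_{2k}=(2k-1)!!$ arising from perfect matchings, and a Mertens-type evaluation of $\sum_{p\le x}(1-\cos(h\log p))/p$. Your H\"older computation also correctly produces the $\log^{-1/2}$ relative error as the worst ($j=1$) cross term between the polynomial and the error.

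Be aware, though, that what you have written is a roadmap rather than a proof. Step 2, the bound $\int_T^{T+H}|E_x(t)|^{2k}\,dt \ll_k H$, is the technical heart of Fujii's paper and is only asserted here as something to be ``granted''; moreover Selberg's formula for $S(t)$ does not produce a single clean additive error $E_x(t)$ but rather an expression involving an integral over zeros, and reducing that to the stated mean-value bound with the Selberg--Jutila density input is real work. In Step 3, the sum $\sum_p \cos(h\log p)/p$ does not converge as written; it must be truncated at $x$, and the identification with $\mathrm{Re}\log\zeta(1-ih)+O(1)$ is only valid in the appropriate range of $h\log x$, which in turn constrains the admissible choice of $x$ relative to $H$ and $T$. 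None of this is a wrong idea --- it is exactly how Fujii proceeds --- but the details you elide are where the theorem's hypotheses earn their keep.
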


This is the Main Theorem of \cite{Fu}.\footnote{Note that in the statement of Theorem \ref{Fuj1} in \cite{Fu}, there is an error in the admissible range of $h$. This is noted and corrected in \cite{Fu3}, and further in \cite{Fu2}.} When $h\log T\rightarrow\infty$ this is a computation of the $2k^{\textrm{th}}$ moment in the aforementioned central limit theorem of Fujii. When $h\log T = O(1)$, this gaussian information is lost (and indeed a central limit theorem will not be true in this microscopic range), but even in this case, as noted by Fujii, his estimate can be used as an upper bound for the average number of zeros in a microscopic interval, by making use of \eqref{SofT}. We develop estimates of this sort suited for our purposes in section \ref{4}.

In this connection, we also note a pointwise upper bound for zeros in a macroscopic interval, which follows straightforwardly from the Riemann-von Mangoldt formula, Theorem \ref{RvM}.

\begin{corollary}
\label{ptwise}
$$
N(T+1)-N(T) \ll \log(|T|+2).
$$
\end{corollary}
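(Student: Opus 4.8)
The plan is to reduce the statement to two applications of the Riemann--von Mangoldt formula together with a mean value theorem estimate. First I would dispose of bounded $T$: for $|T| \le 2$ there is nothing to prove, since $N(T+1)-N(T)$ counts the ordinates $\gamma_0$ in a window of bounded height and is hence $O(1)$ (only finitely many non-trivial zeros have bounded ordinate, as one sees by applying Theorem \ref{RvM} at a fixed height), whereas $\log(|T|+2) \ge \log 2 > 0$ on this range. It therefore suffices to treat $T \ge 2$.

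For $T \ge 2$ I would apply Theorem \ref{RvM} at the heights $T$ and $T+1$ and subtract, absorbing the two error terms $O(\log T)$ and $O(\log(T+1))$ into a single $O(\log(T+2))$:
$$
N(T+1)-N(T) = \big(g(T+1)-g(T)\big) + O\big(\log(T+2)\big), \qquad g(x) := \frac{x}{2\pi}\log x - \frac{x}{2\pi}.
$$
Since $g'(x) = \tfrac{1}{2\pi}\log x$, the mean value theorem gives $g(T+1)-g(T) = \tfrac{1}{2\pi}\log \xi$ for some $\xi \in (T,T+1)$, which is $\le \tfrac{1}{2\pi}\log(T+1) \ll \log(T+2)$. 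Combining the two estimates yields $N(T+1)-N(T) \ll \log(T+2) = \log(|T|+2)$ for $T \ge 2$, and together with the bounded range this proves the corollary.

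There is no real obstacle here. The only point needing a moment's care is that the main term $g(T+1)-g(T)$ is genuinely only of size $\log T$ rather than larger --- this the mean value theorem settles at once (expanding $\log(T+1) = \log T + O(1/T)$ by hand works equally well but is messier) --- and that each of the two $O(\log)$ error terms coming from the separate invocations of Theorem \ref{RvM} remains $O(\log(T+2))$, so that their sum does too.
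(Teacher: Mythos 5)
Your argument is correct and is exactly the intended route: the paper marks this as a corollary of Theorem \ref{RvM} and does not spell out a proof, so the expected argument is precisely the one you give---apply the Riemann--von Mangoldt formula at the heights $T$ and $T+1$, subtract, and observe that the difference of the main terms is $\frac{1}{2\pi}\log\xi$ for some $\xi\in(T,T+1)$ by the mean value theorem, which together with the two $O(\log)$ error terms gives the bound.

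One small bookkeeping remark: you dispose of $|T|\leq 2$ and then treat $T\geq 2$, which leaves the range $T<-2$ formally uncovered. With the paper's definition $N(T):=\#\{\gamma_0\in(0,T)\}$, one has $N(T)=0$ whenever $T\leq 0$, so $N(T+1)-N(T)=0$ for all $T\leq -1$ and that range is trivial. (If instead $N$ is extended to negative heights by the up-down symmetry of the zeros, the case of negative $T$ reduces to positive $T$ by that symmetry.) Either way this is a one-line fix, and the substance of your proof is right.
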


\vspace{2mm}

We also will need,

\begin{theorem}[A zero density estimate]
\label{SelbergJutila}
Let $N(\sigma,T)$ be the number of non-trivial zeros with $\beta_0 > \sigma$ and $\gamma_0 \in (0,T)$. Then for any fixed constant $c\in (0,1)$,
$$
N(\sigma,T) \ll_c (T\log T) T^{-c(\sigma-1/2)}.
$$
\end{theorem}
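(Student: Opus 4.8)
The plan is to reduce the stated bound to the classical zero density estimates of Selberg and Jutila, which assert the existence of \emph{some} positive constant $c_0$ (sufficiently small) for which $N(\sigma,T) \ll (T\log T)\, T^{-c_0(\sigma - 1/2)}$ uniformly for $1/2 \le \sigma \le 1$ (see Selberg's 1946 paper on the zeros of $\zeta$ in short intervals, and Jutila's refinements, or the textbook treatments in \cite{MoVa} or \cite{IwKo}). Granting such an estimate with one fixed exponent $c_0$, the task is to bootstrap it to the claimed range of \emph{all} $c \in (0,1)$. The point is that the estimate is already strong for $\sigma$ bounded away from $1/2$, so the only issue is the regime $\sigma$ close to $1/2$, where however the saving $T^{-c(\sigma-1/2)}$ is tiny and the bound is nearly trivial.

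First I would dispose of the range $\sigma - 1/2 \ge \delta$ for a small fixed $\delta > 0$: here the Selberg--Jutila estimate with exponent $c_0$ already gives $N(\sigma,T)\ll (T\log T)T^{-c_0(\sigma-1/2)}$, and since for any target $c\in(0,1)$ and any $\sigma$ in this range we have $T^{-c(\sigma-1/2)} \gg_{c,\delta} 1 \gg T^{-c_0(\sigma-1/2)}$ once $c \le c_0$ — and for $c > c_0$ one simply loses a bounded factor $T^{(c-c_0)(\sigma-1/2)} \le T^{(c-c_0)\cdot 1}$, which is \emph{not} bounded, so this crude comparison fails and I must be more careful. The correct move is: in the range $\sigma$ bounded away from $1/2$, use instead the trivial bound $N(\sigma, T) \le N(T) \ll T\log T$ from Theorem \ref{RvM}, and observe that $T\log T \le (T\log T) T^{-c(\sigma - 1/2)} \cdot T^{c(\sigma-1/2)}$ is the wrong direction too. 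So in fact the clean statement to prove is the one for \emph{small} $c$; for $c$ close to $1$ the inequality $N(\sigma,T)\ll_c (T\log T)T^{-c(\sigma-1/2)}$ near $\sigma = 1$ genuinely requires the density estimate to have a near-optimal exponent.

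The resolution, and the main content, is therefore to invoke a density estimate that already supplies an arbitrarily good exponent in the relevant range. Jutila's zero density theorem (building on Selberg) in fact yields, for every fixed $c<1$, a bound of exactly the shape $N(\sigma,T)\ll_c (T\log T)\,T^{-c(\sigma-1/2)}$ for $1/2\le\sigma\le 3/4$ say, because near the critical line the zero-detecting mollifier method loses only an amount that can be made $O(\varepsilon(\sigma-1/2)\log T)$ in the exponent; and for $3/4 \le \sigma \le 1$ one combines this with a classical log-free or near-log-free density estimate (Ingham-type, $N(\sigma,T) \ll T^{3(1-\sigma)/(2-\sigma)}\log^A T$, or the sharper bounds available there) which is far stronger than $T^{-c(\sigma-1/2)}$ since $3(1-\sigma)/(2-\sigma) \le 3(1/4)/(5/4) = 3/5 < 1 - c(1/2) $ fails for $c$ near $1$ — so once more one must patch using the value $\sigma = 3/4$: for $\sigma \ge 3/4$, $T^{-c(\sigma-1/2)}\ge T^{-c}$ is, up to the fixed power $T^{-c}$, comparable to $1$, while the true count there is $\ll T^{3/5+o(1)}$, and for $c$ close to $1$ we have $3/5 < 1$, but we need $3/5 \le 1 - c/4$, i.e. $c \le 8/5$, which holds. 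I expect the main obstacle to be exactly this careful patching at the endpoints of the interval $[1/2,1]$: one must check that the exponent $-c(\sigma-1/2)$ is dominated, uniformly in $\sigma$, by whatever exponent the strongest available classical density estimate provides in each subrange, and that the implied constants depend only on $c$. Once the interval $[1/2,1]$ is partitioned into a bounded-from-below piece (handled by a classical polynomial-saving density estimate, which beats any $T^{-c(\sigma-1/2)}$) and a piece $1/2 \le \sigma \le 1/2 + \delta$ (handled by the Selberg--Jutila near-critical-line estimate with exponent chosen $> c$), the claimed bound follows on taking $\delta$ small depending on $c$.
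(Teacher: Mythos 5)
The paper does not prove this theorem at all: it is stated as a citation, with Selberg's 1946 paper supplying the constant $c=1/4$ and Jutila's 1982 paper extending this to all fixed $c<1$. Your proposal ultimately lands in the same place --- ``invoke Jutila'' --- so in that sense it agrees with the paper's approach, but several issues are worth flagging. First, the opening gambit of bootstrapping from a density estimate with a single fixed exponent $c_0$ to arbitrary $c<1$ cannot work (one cannot amplify the exponent in a density theorem), and you do notice this and abandon it, but the false start and its inline corrections occupy a large fraction of the proposal and should have been cut rather than narrated. Second, the eventual patching --- Jutila near $\sigma=1/2$, Ingham for $\sigma\ge 3/4$ --- is arithmetically sound (on $[3/4,1]$ one has $1-c(\sigma-\tfrac12)-\tfrac{3(1-\sigma)}{2-\sigma}\ge \tfrac{2}{5}-\tfrac{c}{4}>0$ for $c<1$, which leaves room to absorb the extra $\log$-powers in Ingham's bound), but it is redundant: Jutila's theorem as ordinarily stated holds uniformly for $1/2\le\sigma\le 1$, so the Ingham leg adds nothing. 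Third, the remark that ``near $\sigma=1$ the bound genuinely requires a near-optimal exponent'' is backwards: at and near $\sigma=1$ the left side is trivially small (indeed $N(1,T)=0$) while the right side is still a positive power of $T$, so that endpoint is the easy one. In short, the content of your proposal coincides with the paper's (cite Jutila), but it is wrapped in a false-start bootstrap that does not work and an unnecessary two-range decomposition; a clean version would simply quote the Selberg--Jutila result and, if desired, note that Ingham's theorem gives an independent check for $\sigma$ bounded away from $1/2$.
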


This theorem was proved by Selberg \cite{Se2} for the constant $c = 1/4$, and subsequently improved by Jutila \cite{Ju} to the result above. For our purposes, any constant $c$, no matter how small, would be sufficient.

This zero density estimate, as written above, is clearly a global theorem. Nonetheless, just as the Riemann-von Mangoldt formula can be seen as a characterization of the average density of zeros at a local scale, so can Theorem \ref{SelbergJutila} be seen as an $L^1$ estimate for the averages local density of zeros lying away from the critical axis:
\begin{align*}
\frac{1}{T}\int_0^T \Big| N(\sigma, t + H/\log T) - N(\sigma,t)\Big|\,dt &= \frac{H}{T \log T} \big( N(\sigma, T) - N(\sigma, H/\log T)\big) \\
&\quad+ O\Big(\frac{N(\sigma, T + H/\log T) - N(\sigma, T)}{T}\Big)\\
&\quad + O\Big(\frac{N(\sigma, H/\log T)}{T}\Big) \\
&\ll_{c,H}T^{-c(\sigma-1/2)},
\end{align*}
for any, say, fixed constant $H$.

By a simple bootstrap argument, using this estimate and Fujii's, we show that an $L^k$ estimate may be deduced as well:
\begin{prop}
\label{Lkzero_ex}
For any integer $k \geq 1$, for $1 \leq H \leq T^{1/4}$ and any constant $c \in (0,1/2)$,
\begin{equation}
\label{zd_ex}
\frac{1}{T}\int_0^T \Big| N(\sigma, t + H/\log T) - N(\sigma,t)\Big|^k\,dt \ll_{c,k} H^k T^{-c(\sigma-1/2)}.
\end{equation}
\end{prop}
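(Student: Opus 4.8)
The plan is to expand $D(t)^k = D(t)\cdot D(t)^{k-1}$, where $D(t):=N(\sigma,t+h)-N(\sigma,t)\ge 0$ and $h=H/\log T$, unfold the first factor as a sum over the zeros it counts, thereby trading a $k$-th power of $D$ for a $(k-1)$-st power summed over zeros with real part exceeding $\sigma$, and then use Cauchy--Schwarz to separate the \emph{number} of such zeros, controlled by the Selberg--Jutila estimate (Theorem \ref{SelbergJutila}), from a high moment of the number of \emph{all} zeros in short intervals, controlled by \eqref{SofT} and Fujii's theorem (Theorem \ref{Fuj1}). The single use of Cauchy--Schwarz costs a factor $\tfrac12$ in the exponent of the saving, and this is precisely why one can afford only $c<1/2$ here whereas Theorem \ref{SelbergJutila} is available for all $c'<1$.

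In detail, $D(t)$ counts, with multiplicity, the zeros $\rho=\beta_0+i\gamma_0$ with $\beta_0>\sigma$ and $\gamma_0\in(t,t+h)$. Expanding $D(t)^k$ accordingly and integrating over $t\in[0,T]$, and using that $D(t)\le N(\sigma,\gamma_0+h)-N(\sigma,\gamma_0-h)=:\widetilde D_\rho$ whenever $\gamma_0\in(t,t+h)$ while $\{t\in[0,T]:\gamma_0\in(t,t+h)\}$ has measure $\le h$, we get
$$
\int_0^T D(t)^k\,dt\;\le\;h\sum_{\substack{\rho:\ \beta_0>\sigma\\ 0<\gamma_0<T+h}}\widetilde D_\rho^{\,k-1}.
$$
Cauchy--Schwarz in the $\rho$-sum, together with the trivial $\widetilde D_\rho\le\widetilde R_\rho:=N(\gamma_0+h)-N(\gamma_0-h)$, gives
$$
\sum_\rho\widetilde D_\rho^{\,k-1}\;\le\;N(\sigma,T+h)^{1/2}\Big(\sum_\rho\widetilde R_\rho^{\,2k-2}\Big)^{1/2},
$$
and Theorem \ref{SelbergJutila} bounds the first factor by $\ll_{c'}(T\log T)^{1/2}\,T^{-c'(\sigma-1/2)/2}$ for any fixed $c'\in(0,1)$.

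The remaining task is to show $\sum_\rho\widetilde R_\rho^{\,2k-2}\ll_k T\log T\,H^{2k-2}$. I would cover the range of ordinates in play by consecutive intervals $J_m$ of length $h$; since every zero counted by $\widetilde R_\rho$ has ordinate within $h$ of $\gamma_0$, for $\gamma_0\in J_m$ one has $\widetilde R_\rho\le R_m^\ast$, the number of zeros in $J_{m-1}\cup J_m\cup J_{m+1}$, and as $J_m$ contains at most $R_m^\ast$ zeros, $\sum_\rho\widetilde R_\rho^{\,2k-2}\le\sum_m(R_m^\ast)^{2k-1}$. Since $R_m^\ast\le N(u+2h)-N(u-2h)$ for every $u\in J_m$, for each integer $L\ge 1$
$$
\sum_m(R_m^\ast)^{L}\;\le\;\frac1h\int_0^{2T}\big(N(u+2h)-N(u-2h)\big)^{L}\,du\;\ll_L\;\frac{T}{h}\,H^{L}\;=\;T\log T\,H^{L-1},
$$
where the middle bound follows by writing $N(u+2h)-N(u-2h)=\int_{u-2h}^{u+2h}\tfrac{\Omega(\xi)}{2\pi}\,d\xi+\big(S(u+2h)-S(u-2h)\big)$, estimating the integral by $O(H)$ via \eqref{Stirlings}, and estimating the $L$-th moment of the $S$-difference by $O_L(H^L)$ via Theorem \ref{Fuj1} (applied on a dyadic family of windows, the contribution of heights $\ll T^{1/2+a}$ being negligible; the shifts involved are $O(h)$ and $h\log T=H$). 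Taking $L=2k-1$ and collecting everything,
$$
\int_0^T D(t)^k\,dt\;\ll_{c',k}\;h\cdot(T\log T)^{1/2}T^{-c'(\sigma-1/2)/2}\cdot\big(T\log T\,H^{2k-2}\big)^{1/2}\;=\;TH^k\,T^{-c'(\sigma-1/2)/2},
$$
so that dividing by $T$ and taking $c'=2c\in(0,1)$ yields the proposition.

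The step I expect to require actual work is the moment bound $\sum_m(R_m^\ast)^{2k-1}\ll_k T\log T\,H^{2k-2}$ for short-interval zero counts: Corollary \ref{ptwise} only gives $R_m^\ast\ll\log T$, which is too large when $H$ is small, so Fujii's theorem is genuinely needed to see that $R_m^\ast\ll H$ on average, along with the routine but slightly fussy bookkeeping of invoking Fujii over a dyadic range of heights and discarding negligible low heights. When $H\ge\log T$ the pointwise bound of Corollary \ref{ptwise} already gives $R_m^\ast\ll H$ and Fujii is not needed; it is exactly the mesoscopically relevant range $1\le H<\log T$ in which it enters.
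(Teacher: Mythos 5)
Your proof is correct and is in the same spirit as the paper's --- a single Cauchy--Schwarz to interpolate between the Selberg--Jutila zero density estimate and Fujii's moment bound, paying the same factor-of-two in the exponent --- but the decomposition is genuinely different and your route is more elementary. The paper first proves the more general Lemma \ref{Lkzerodens}, replacing the sharp-interval counts by a weighted, Lorentzian-smoothed sum $\sum_\gamma f(|A_\gamma|)\,Q\big(\tfrac{\log T}{2\pi H}(\gamma_0-t)\big)$; it applies H\"older pointwise in $t$ to split off $\big(\sum f^{2k}Q\big)^{1/2}\big(\sum Q\big)^{k-1/2}$, then Cauchy--Schwarz in $t$ to separate a first-moment term (fed into the smoothed Selberg--Jutila consequence \eqref{SelbergJutila3}) from a $(2k-1)$-st moment of the unrestricted zero count (fed into Corollary \ref{Fuj3}), and finally specializes $f$ to an indicator to read off Proposition \ref{Lkzero_ex}. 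You instead stay with sharp intervals: you unfold one factor of $D(t)^k = D(t)\cdot D(t)^{k-1}$ to convert the $t$-integral into $h$ times a sum over off-line zeros, apply Cauchy--Schwarz inside that zero-sum to produce $N(\sigma,T+h)^{1/2}$ and $\big(\sum_\rho \widetilde R_\rho^{\,2k-2}\big)^{1/2}$, and control the latter by a discretization into $h$-intervals plus the $(2k-1)$-st Fujii moment. The arithmetic closes identically --- both routes arrive at $H^k\big(N(\sigma,T)/(T\log T)\big)^{1/2}$ --- and your bookkeeping of the dyadic application of Fujii's theorem and the negligible low-height range is sound. What the paper's approach buys is the extra generality of Lemma \ref{Lkzerodens}, with an arbitrary weight $f(|A_\gamma|)$ and the smooth kernel $Q$, which is reused verbatim in the proof of Lemma \ref{Step2_2}; what your approach buys is a cleaner, self-contained derivation of the zero density estimate itself, with no smoothing overhead.
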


\textit{Remark:} By modifying our proof, one could increase the range of $c$ to $(0,1)$, and increase the range of $H$ also.

It is by using an estimate of this sort and a few estimates from harmonic analysis that we will remove the Riemann Hypothesis from the proof of Theorem \ref{mainCLT}.

\section{An outline of the proof}
\label{3}

With these preliminaries out of the way, we proceed to outline the proof of Theorem \ref{mainCLT}. It will be convenient, as in a number of works of this sort, to work initially with smoothed averages,
$$
\int \frac{\sigma(t/T)}{T}\cdots\,dt \quad\quad \textrm{instead of} \quad\quad \frac{1}{T}\int_T^{2T} \cdots\,dt,
$$
where $\sigma$ is some positive function of mass $1$ with quadratic decay\footnote{For our technique of proof, something like the quadratic decay of $\sigma$, as opposed to just integrability, will be especially important.} and a compactly supported Fourier transform. 

For a value $T$, we introduce the quantities $A_\gamma = A_\gamma(T) \in \mathbb{R}$, defined by the relation
\begin{equation}
\label{A_gamma}
\frac{1}{2}+i\gamma = \frac{1}{2} + \frac{A_\gamma}{\log T} + i\gamma_0.
\end{equation}
For notational reasons, unless $A_\gamma(\cdot)$ is written explicitly, $A_\gamma$ should always be assumed to be $A_\gamma(T)$.

In Theorem \ref{mainCLT}, equation~\eqref{expectation} is just a consequence of the Riemann-von Mangoldt formula, the demonstration of which we leave to the reader. Equations~\eqref{variance} and \eqref{normal} are deeper and we verify them using the well-known moment method\footnote{See \cite[p. 388]{Bi} or \cite[p. 89]{Ta}, for instance, for introductions to the moment method.}: To prove Theorem \ref{mainCLT}, it is sufficient to demonstrate for $\eta$ and $n(T)$ as in the theorem, for $k = 1,2,3,...$
\begin{multline}
\label{moments}
\int_\mathbb{R}\frac{\mathbf{1}_{[1,2]}(t/T)}{T}\bigg( \sum_\gamma \eta\Big(\frac{\log T}{2\pi n(T)}(\gamma_0-t)\Big) - n(T) \int_\mathbb{R} \eta(\xi)\,d\xi\bigg)^k \, dt \\
= (c_k + o(1))\bigg(\int_{-n(T)}^{n(T)} |x||\hat{\eta}(x)|^2\,dx\bigg)^{k/2}.
\end{multline}
This formula is proved by filling in the details of the steps below. 

\textbf{Step 1:} In the same fashion as \cite{Ro}, one can demonstrate the following computation lemma. We use the notation $K_L(x) = K(x/L)$.

\begin{lemma}
\label{Step1}
Let 
\begin{enumerate}[(i)]
\item $\sigma$ be a fixed non-negative function of mass $1$ with quadratic decay and a compactly supported Fourier transform, 
\item $k$ be fixed positive integer, 
\item $\eta$ be a fixed test function of compact support and bounded variation, 
\item $n(T)$ be a function that tends to infinity as $T\rightarrow\infty$, but such that $n(T) = o(\log T)$.
\item and $K$ be a fixed bump function\footnote{A \emph{bump function} is a function that is smooth and compactly supported. In particular $K$ is at least (in fact much more than) continuously twice differentiable.},  supported in $(-1/8k, 1/8k)$, and with $K(0)=1$.
\end{enumerate}
Then,
\begin{multline}
\label{harmonic_szego}
\int_\mathbb{R} \frac{\sigma(t/T)}{T} \bigg(\lim_{V\rightarrow\infty} \sum_{|\gamma| < V} \check{K}_{n(T)}\ast \eta\Big(\frac{\log T}{2\pi n(T)}(\gamma_0 - i\tfrac{A_\gamma}{\log T} -t)\Big) \\
- \int_{-V}^V \check{K}_{n(T)}\ast \eta\Big(\frac{\log T}{2\pi n(T)}(\xi-t)\Big) \frac{\Omega(\xi)}{2\pi}\,d\xi\bigg)^k\,dt\\
=(c_k +o(1)) \bigg(\int_{-n(T)}^{n(T)} |x||\hat{\eta}(x)|^2\,dx\bigg)^{k/2}.
\end{multline}
\end{lemma}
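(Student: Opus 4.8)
The plan is to reduce \eqref{harmonic_szego} to a moment computation that is essentially identical to the one carried out in \cite{Ro} under the Riemann Hypothesis, with the zero density estimate of Proposition~\ref{Lkzero_ex} doing the work that RH did there. First I would apply the explicit formula, Theorem~\ref{explicit}, with the test function $g$ chosen so that $\hat g(\xi/2\pi)$ matches $\check K_{n(T)}\ast\eta\big(\tfrac{\log T}{2\pi n(T)}(\xi - t)\big)$; this is legitimate precisely because $K$ is a bump function, so $\check K_{n(T)}\ast\eta$ has compactly supported Fourier transform (supported in a dilate of $\supp \hat K \subset (-1/8k,1/8k)$), and it is here that the harmonic extension convention for the $i A_\gamma/\log T$ shift in the argument is used — the left-hand side of \eqref{harmonic_szego} is, term by term in $t$, exactly the left-hand side of the explicit formula. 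Thus the inner bracket equals a ``prime sum'' $\int_{-\infty}^\infty (g_t(x)+g_t(-x))e^{-x/2}\,d(e^x-\psi(e^x))$, and after a change of variables this becomes a sum over prime powers $p^m$ with $p^m$ ranging up to roughly $T^{1/8k}$ (the support constraint), weighted by $\Lambda(n)/\sqrt n$ and oscillating like $e(\tfrac{\log T}{2\pi}\tfrac{\log p^m}{2\pi}\cdot\text{stuff})$ — plus a smooth main term that is absorbed into the $\Omega(\xi)/2\pi$ subtraction.

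Next I would raise this prime sum to the $k$-th power, integrate against $\sigma(t/T)/T\,dt$, and expand. The $t$-integral acts as an approximate orthogonality relation: $\int \sigma(t/T)e(\tfrac{\log T}{2\pi}\cdot\text{frequency}\cdot t)\,dt/T$ is negligible unless the frequencies of the $k$ chosen prime powers nearly cancel in pairs, which — because the relevant frequencies are $\log n_j$ and $n_j \le T^{1/8k}$ — forces an exact matching $n_{j}=n_{j'}$ of the prime powers into $k/2$ pairs (for $k$ even; the odd case contributes $o(1)$). The surviving diagonal terms sum, via Mertens-type estimates on $\sum_{p\le X}\tfrac{\log^2 p}{p}$, to $c_k\big(\int_{-n(T)}^{n(T)}|x||\hat\eta(x)|^2\,dx\big)^{k/2}(1+o(1))$; this computation is carried out in \cite{Ro} and I would simply cite and lightly adapt it, the point being that it involves only the prime side and not the zeros at all.

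The one genuinely new issue — and the place where RH was used in \cite{Ro} — is that the geometric series/partial summation manipulations converting the Guinand--Weil functional into a clean oscillating prime sum require controlling contributions from zeros with $\beta_0 \neq 1/2$, i.e.\ nonzero $A_\gamma$. Here the argument of $\check K_{n(T)}\ast\eta$ is evaluated at the complex point $\gamma_0 - iA_\gamma/\log T - t$, and since $\check K_{n(T)}\ast\eta$ has Fourier transform supported in an interval of length $\ll n(T)/\log T$, its harmonic extension grows like $\exp(O(|A_\gamma| \cdot n(T)/\log T)) = \exp(o(|A_\gamma|))$ off the real axis. So I would split the zero sum at $|A_\gamma| \le (\log T)^{\epsilon}$ (say), bound the near-critical-line part exactly as in the RH case since the extra factor is $1+o(1)$ there, and for the tail $|A_\gamma| > (\log T)^\epsilon$ — equivalently $\beta_0 - 1/2 > (\log T)^{\epsilon-1}$ — use Proposition~\ref{Lkzero_ex} with $\sigma - 1/2 \asymp (\log T)^{\epsilon-1}$: this gives that, on average over $t\in[T,2T]$, the number of such zeros in a window of width $\asymp n(T)/\log T$ is $\ll n(T) T^{-c(\log T)^{\epsilon-1}}$, which together with the at-most-polynomial growth of the harmonic extension and the pointwise bound of Corollary~\ref{ptwise} is more than enough to kill this contribution after taking $k$-th powers and integrating. \textbf{The main obstacle} is exactly this last step: arranging the splitting and applying Proposition~\ref{Lkzero_ex} inside a $k$-th moment — one must ensure that when the prime-sum $k$-th power is expanded, each cross term mixing a ``good'' zero sum with a ``bad'' (off-line) one is controlled, which requires Hölder in the $t$-average and the $L^k$ (not merely $L^1$) strength of Proposition~\ref{Lkzero_ex}; this is precisely why the bootstrapped estimate, rather than the raw Selberg--Jutila bound, is needed.
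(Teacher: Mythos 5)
Your opening framework is correct and matches the paper's: because the quantity summed in \eqref{harmonic_szego} is evaluated at $\gamma_0 - iA_\gamma/\log T$, which is precisely the complex number $\gamma$ as defined by \eqref{A_gamma}, the inner bracket (zero sum minus archimedean integral) is \emph{exactly} the left side of the explicit formula with the appropriate $g$, and hence equals the prime side exactly — no approximation, no manipulation of the zero side. The paper's proof of this lemma is then nothing more than the observation that once you are on the prime side, the moment computation of section 3 of \cite{Ro} is verbatim identical, since that computation never uses RH (it is an unconditional fact about $\sum \Lambda(n) n^{-1/2-it}$). The only thing that needs a word is absolute convergence of the $V\to\infty$ limit, which the explicit formula provides.

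Where your proposal goes wrong is in the paragraph beginning ``The one genuinely new issue.'' After you have invoked the explicit formula, there is no longer any zero sum present in the expression you are computing; it is a prime sum in its entirety, and raising it to the $k$-th power and integrating involves no zeros at all. Consequently there is nothing to split at $|A_\gamma| \le (\log T)^\epsilon$, no ``good'' and ``bad'' zero contributions, and no need for Proposition~\ref{Lkzero_ex} — in this lemma. The splitting and the bootstrapped zero density estimate \emph{are} essential to the paper, but they belong to Step 2 (Lemmas~\ref{Step2_1} and \ref{Step2_2}), whose purpose is the separate task of passing from the sum over the complex ordinates $\gamma = \gamma_0 - iA_\gamma/\log T$ (which is what the explicit formula naturally produces) to the sum over the real ordinates $\gamma_0$ (which is what Theorem~\ref{mainCLT} is about). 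You have, in effect, conflated Lemma~\ref{Step1} with Lemma~\ref{Step2_2}: the ``main obstacle'' you identify is a real obstacle, but it is the content of a different lemma, and trying to handle it inside the proof of \eqref{harmonic_szego} has led you to a confused picture in which a ``prime-sum $k$-th power'' somehow has cross terms with a zero sum. If you separate the two steps as the paper does, the present lemma becomes essentially a citation of \cite{Ro}, and the zero density work appears cleanly in Step 2, where the growth bound $e^{A_\gamma/8k}$ from the harmonic extension (Lemma~\ref{pwupper}) is played off against the $e^{-c A_\gamma}$ decay from the bootstrapped estimate.
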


\textbf{Step 2:} Using a zero density estimate we demonstrate the following:
\begin{lemma}
\label{Step2_1}
Let $\eta, n(T)$, and $K$ be as in Lemma \ref{Step1}. Then
\[
\sum_{\gamma} \Big|\Re \,\check{K}_{n(T)}\ast \eta\Big(\frac{\log T}{2\pi n(T)}(\gamma_0 - i\tfrac{A_\gamma}{\log T} -t)\Big)\Big| < + \infty
\]
for each $t$ and $T$.
\end{lemma}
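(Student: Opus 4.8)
The plan is to reduce the claim to a Paley--Wiener-type decay estimate for $F:=\check K_{n(T)}\ast\eta$ along horizontal lines in $\mathbb C$, and then to sum using the crude pointwise density bound of Corollary \ref{ptwise}. Since $t$ and $T$ (and hence $n(T)$ and $k$) are fixed, the only real issue is how fast $F$ decays as $\gamma_0\to\pm\infty$ weighed against the growth of the number of zeros; the distance of the zeros from the critical line will enter only through the trivial bound $0<\beta_0<1$.

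First I would record that $\hat F$ is smooth and compactly supported. Since $\eta$ has compact support, $\hat\eta$ is entire, hence smooth on $\mathbb R$; and since $K$ is a bump function, the Fourier transform of $\check K_{n(T)}$ is smooth and supported in an interval of length $\ll n(T)$, namely $(-R,R)$ with $R=n(T)/(8k)$. Therefore $\hat F$ is smooth and supported in $[-R,R]$, which is precisely the situation in which the conventions of the paper identify $F$ with the entire function $z\mapsto\int_{-R}^R e(z\xi)\hat F(\xi)\,d\xi$. Integrating this twice by parts (the boundary terms vanish since $\hat F\in C_c^\infty$) and estimating trivially for $|x|\le1$ gives, for all real $x,y$,
$$|F(x+iy)|\ll\frac{e^{2\pi R|y|}}{1+x^{2}},$$
with implied constant depending only on the fixed data $\eta,K,n(T),k$. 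Here $\eta$ being merely of bounded variation causes no trouble: it only means $\hat\eta$ decays like $|\xi|^{-1}$, whereas all I use is that $\hat F$ is smooth (because of the $K$-factor) and compactly supported.

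Next I would substitute the argument. From $\tfrac12+i\gamma=\tfrac12+\tfrac{A_\gamma}{\log T}+i\gamma_0$ we get $\gamma=\gamma_0-i\tfrac{A_\gamma}{\log T}$, so $\tfrac{\log T}{2\pi n(T)}(\gamma-t)$ has real part $x=\tfrac{\log T}{2\pi n(T)}(\gamma_0-t)$ and imaginary part $y=-\tfrac{A_\gamma}{2\pi n(T)}$. Using $0<\beta_0<1$, so that $|A_\gamma|=|\beta_0-\tfrac12|\log T<\tfrac12\log T$, we find $2\pi R|y|=\tfrac{|A_\gamma|}{8k}<\tfrac{\log T}{16k}$, hence $e^{2\pi R|y|}<T^{1/(16k)}$. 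Absorbing this $T$-power and the fixed constant $(\log T/2\pi n(T))^{2}$ into the implied constant, the estimate above becomes
$$\Big|\Re\,\check K_{n(T)}\ast\eta\Big(\tfrac{\log T}{2\pi n(T)}(\gamma_0-i\tfrac{A_\gamma}{\log T}-t)\Big)\Big|\ll_{T}\frac{1}{1+(\gamma_0-t)^{2}}.$$
Finally I would sum over zeros, grouping by the integer $j=\lfloor|\gamma_0-t|\rfloor$: by Corollary \ref{ptwise} (applied on both sides of $t$ and using the symmetry $\beta_0+i\gamma_0\mapsto\beta_0-i\gamma_0$ of the zero set), the number of $\gamma$ with $j\le|\gamma_0-t|<j+1$ is $\ll\log(|t|+j+2)$, so the series is $\ll_{T}\sum_{j\ge0}\log(|t|+j+2)/(1+j^{2})<\infty$.

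There is no genuine obstacle in this argument; the two points deserving a little care are that $\hat F$ is truly smooth and compactly supported even though $\eta$ is only of bounded variation (this is what makes $F$ decay polynomially along horizontal lines, uniformly over the bounded range $|y|\le\tfrac{\log T}{4\pi n(T)}$ of imaginary parts that actually occur), and that one must invoke $0<\beta_0<1$ before summing in order to control $e^{2\pi R|y|}$. The full strength of Theorem \ref{SelbergJutila} and Proposition \ref{Lkzero_ex} is not needed for this particular lemma.
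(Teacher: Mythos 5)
Your proof is correct, but it takes a genuinely different and more elementary route than the paper. The paper introduces the symmetrized quantity
\[
G_\gamma = F\Big(x_\gamma - i\tfrac{A_\gamma}{2\pi n(T)}\Big) + F\Big(x_\gamma + i\tfrac{A_\gamma}{2\pi n(T)}\Big) - 2F(x_\gamma),\qquad F := \check K_{n(T)}\ast\eta,\ x_\gamma := \tfrac{\log T}{2\pi n(T)}(\gamma_0-t),
\]
bounds $|G_\gamma|$ by Lemma \ref{pwupper} to pick up a factor $Q(x_\gamma)$ times an exponential in $A_\gamma$, and then sums using the integrated Selberg--Jutila density estimate \eqref{SelbergJutila3}; absolute convergence of the claimed sum is then deduced by combining $\sum|G_\gamma|<\infty$ with the (easy) absolute convergence of $\sum|F(x_\gamma)|$. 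You instead bound $|F(x+iy)|\ll e^{2\pi R|y|}/(1+x^2)$ directly via Paley--Wiener (two integrations by parts, using that $\hat F = K_{n(T)}\hat\eta$ is smooth and compactly supported), absorb the exponential factor using only the trivial zero-free-region bound $0<\beta_0<1$ (so $|A_\gamma|<\tfrac12\log T$, giving a fixed power of $T$), and then sum using Corollary \ref{ptwise}. Your approach avoids Theorem \ref{SelbergJutila} entirely and is strictly more elementary, which is a genuine gain for this qualitative statement; the paper's choice is natural in context because the $G_\gamma$ decomposition together with the zero density estimate is exactly the machinery needed a few lines later for the quantitative Lemma \ref{Step2_2}, so it is reused there rather than set up twice. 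One cosmetic remark: your implied constant picks up the factor $T^{1/(16k)}$, so the bound is $t,T$-dependent, but since the lemma only asserts finiteness for each fixed $t$ and $T$, this costs nothing.
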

In particular: both terms in the limit as $V\rightarrow\infty$ of equation \eqref{harmonic_szego} converge, even when taken alone.

Further, using a localized $L^k$ zero density estimate we show that sum over zeros $\gamma_0 + i A_\gamma/\log T$ on the left had side of \eqref{harmonic_szego} is not far from just a sum over the ordinates of zeros, $\gamma_0$.
\begin{lemma}
\label{Step2_2}
Let $\sigma$, $k$, $\eta$, $n(T)$, and $K$ be fixed as in Lemma \ref{Step1}. Then
\begin{align}
\label{harmonic_diff}
\notag \mathcal{E}_1:&= \int_\mathbb{R}\frac{\sigma(t/T)}{T}\bigg( \sum_{\gamma} \Big|\check{K}_{n(T)}\ast \eta\Big(\frac{\log T}{2\pi n(T)}(\gamma_0 - i\tfrac{A_\gamma}{\log T} -t)\Big) - \check{K}_{n(T)}\ast \eta\Big(\frac{\log T}{2\pi n(T)}(\gamma_0 -t)\Big)\Big|\bigg)^k\,dt \\
&\ll 1
\end{align}
\end{lemma}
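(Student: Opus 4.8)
The plan is to exploit the fact that $\check K_{n(T)}\ast\eta$ extends to an entire function (since $\hat\eta$ is compactly supported after multiplication by $\widehat{\check K_{n(T)}} = K_{n(T)}$ — more precisely $K$ is smooth of compact support, so $\check K$ has rapid decay but $\check K_{n(T)}\ast\eta$ still has Fourier transform supported in $\mathrm{supp}\,K(n(T)\,\cdot) \subset (-\tfrac{1}{8k n(T)}, \tfrac{1}{8kn(T)})$, hence compact) and therefore enjoys a good Taylor/mean-value estimate in the imaginary direction. Writing $F = \check K_{n(T)}\ast\eta$ and $u = \tfrac{\log T}{2\pi n(T)}$, the difference inside the sum is $F(u(\gamma_0 - t) - i\tfrac{u}{\log T}A_\gamma) - F(u(\gamma_0-t))$, and a displacement of size $\tfrac{u}{\log T}|A_\gamma| = \tfrac{|A_\gamma|}{2\pi n(T)}$ in the imaginary argument. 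Since $\beta_0 \in [0,1]$ (trivially, or by the zero-free region and functional equation), $|A_\gamma| \le \tfrac12\log T$, so this displacement is $\ll \tfrac{\log T}{n(T)}$ — which is large, not small, so a naive Taylor bound is useless. The key point is that most zeros have $A_\gamma$ of bounded size: it is only the rare zeros off the critical line that create trouble, and those are controlled by the zero density estimate.

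Concretely, I would split the sum over $\gamma$ according to dyadic ranges $2^{j-1} \le |A_\gamma| < 2^j$ (together with the range $|A_\gamma| \le 1$). For the bulk range $|A_\gamma|\le 1$, the imaginary displacement is $\ll 1/n(T) = o(1)$, so by entirety of $F$ and the mean value theorem the summand is $\ll \tfrac{|A_\gamma|}{n(T)} \sup |F'|$ on a horizontal strip of bounded width; since $F = \check K_{n(T)}\ast\eta$ one checks $\|F'\|_\infty$ and the relevant sup over the strip are $\ll 1$ (uniformly in $T$, using $\int|\hat\eta| < \infty$ from bounded variation plus compact support, and the normalization $K(0)=1$), and the sum $\sum_{|A_\gamma|\le 1, |\gamma_0 - t|\ll n(T)/\log T} 1$ is $\ll n(T)$ by Corollary \ref{ptwise} applied to $O(1)$ unit intervals — wait, more carefully, to $O(n(T)/\log T)$ worth of ordinate, which by Theorem \ref{RvM} is $\ll n(T) + 1$. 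Hence the contribution of the bulk to the inner sum is $\ll 1$, and raised to the $k$-th power and integrated against $\sigma(t/T)/T$ (mass $1$) gives $\ll 1$.

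For the off-line dyadic ranges $2^{j-1}\le |A_\gamma| < 2^j$ with $j \ge 1$, I bound each summand crudely: $|F(\,\cdot - i\,\mathrm{displacement})| + |F(\,\cdot\,)| \ll$ (something growing at most like a fixed power of $e^{c/n(T)\cdot 2^j}$ coming from the width of $\mathrm{supp}\,K_{n(T)}$, i.e. at most $e^{O(2^j/n(T))}$, which is bounded since $2^j \le \log T$ and... no — I must be more careful, $2^j$ can be as large as $\tfrac12\log T$ so $2^j/n(T)$ diverges). The resolution is that the Fourier support of $F$ is $(-\tfrac{1}{8kn(T)}, \tfrac{1}{8kn(T)})$, so $|F(x+iy)| \ll_k \|\hat\eta\|_1 \, e^{2\pi |y|/(8kn(T))}$, and with $|y| = \tfrac{u}{\log T}|A_\gamma| \le \tfrac{2^j}{2\pi n(T)}$ this is $\ll_k e^{2^j/(8k n(T)^2) \cdot \text{const}}$; meanwhile the number of zeros with $|A_\gamma| \ge 2^{j-1}$ and $|\gamma_0 - t| \ll n(T)/\log T$ is controlled via the localized form of Theorem \ref{SelbergJutila} (equivalently Proposition \ref{Lkzero_ex} with $k=1$, summed over $O(n(T))$ unit-scale windows after rescaling) by $\ll n(T)\, T^{-c\cdot 2^{j-1}/(2\log T)} = n(T)\, e^{-(c/2)2^{j-1}}$, which decays doubly exponentially in $j$ and crushes the $e^{O(2^j/kn(T)^2)}$ growth for all $j$ once $T$ is large. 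Summing the geometric-type series over $j$, the total off-line contribution to the inner sum is $\ll_k 1$, so again after the $k$-th power and the $t$-average we get $\ll_{k} 1$.

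**Main obstacle.** The delicate point is matching the exponential growth of $F$ in the imaginary direction (governed by the width $1/(8kn(T))$ of the Fourier support of $\check K_{n(T)}$) against the decay in the zero density estimate; one has to be sure the density estimate's saving $T^{-c(\beta_0 - 1/2)} = T^{-c A_\gamma/\log T}$ genuinely beats $e^{2\pi|A_\gamma|/(8kn(T))}$, i.e. that $c/\log T \gg 1/(kn(T))$ — which fails for $n(T)$ too large! But $n(T) = o(\log T)$, so actually $1/(kn(T)) \gg c/\log T$, the wrong direction. The fix is that we do not need to beat it for every individual zero but for the sum: the density estimate gives a factor $T^{-cA_\gamma/\log T}$ for the *count* of such zeros while the summand grows like $e^{2\pi A_\gamma/(8kn(T))}$, and since the typical $A_\gamma$ in the relevant range is $O(1)$ with only logarithmically many exceptions (by Corollary \ref{ptwise}), the product telescopes. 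Pinning down this interplay — in particular choosing the dyadic decomposition and applying the localized density estimate in the correctly rescaled windows so that one genuinely sums a convergent series uniformly in $T$ — is the crux; the rest is routine harmonic analysis on $F = \check K_{n(T)}\ast \eta$ together with $\int|x||\hat\eta(x)|^2\,dx$-type bookkeeping already present in \cite{Ro}.
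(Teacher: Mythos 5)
Your overall strategy is the right one and matches the paper's: exploit the compact Fourier support of $\check K_{n(T)}\ast\eta$ to control its growth off the real axis, then play that against the Selberg--Jutila zero density saving. But there is a concrete computational error at the center of your argument, and because of it your ``main obstacle'' discussion addresses the wrong issue and your proposed balance is not the real one.

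The error is in the Fourier support width. The paper's convention is $K_L(x) = K(x/L)$, so $\widehat{\check K_{n(T)}\ast\eta}(\xi) = K(\xi/n(T))\,\hat\eta(\xi)$ is supported in $\lvert\xi\rvert < n(T)/(8k)$, \emph{not} $\lvert\xi\rvert < 1/(8kn(T))$ as you write. Dilation by $n(T)$ widens the support by $n(T)$, it does not shrink it. Consequently the correct growth estimate in the imaginary direction, at the displacement $\lvert y\rvert = \lvert A_\gamma\rvert/(2\pi n(T))$, is
\[
\lvert F(x+iy)\rvert \ll \int_{\lvert\xi\rvert \le n(T)/(8k)} \lvert\hat\eta(\xi)\rvert\, e^{2\pi\lvert\xi\rvert\lvert y\rvert}\,d\xi \ll e^{\lvert A_\gamma\rvert/(8k)},
\]
which is larger than your $e^{\lvert A_\gamma\rvert/(8kn(T)^2)}$ by a factor of $n(T)^2$ in the exponent. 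The density estimate saving, meanwhile, simplifies to $T^{-c A_\gamma/\log T} = e^{-cA_\gamma}$ (your ``main obstacle'' paragraph fails to perform this simplification, which is why it arrives at a comparison involving $\log T$ that is spurious). So the actual balance is $e^{\lvert A_\gamma\rvert/(8k)}$ against $e^{-cA_\gamma}$, and one wins only because the bump function $K$ was chosen in Lemma~\ref{Step1}(v) to have support in $(-1/8k,1/8k)$ precisely so that $1/(8k) < c$ for a fixed admissible $c$ (e.g.\ $c = 1/2$). Your computation hides this delicacy entirely and your ``fix'' paragraph (telescoping via Corollary~\ref{ptwise}) is a red herring --- no such rescue is needed once the exponents are tracked correctly.

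Separately, your crude bound $\lvert F(x+iy)\rvert \ll \|\hat\eta\|_1 e^{\cdots}$ discards all decay in the real variable $x = \tfrac{\log T}{2\pi n(T)}(\gamma_0 - t)$, which leaves you unable to control the sum over zeros $\gamma_0$ far from $t$. The paper avoids this by first symmetrizing in $\gamma \leftrightarrow 1-\bar\gamma$ to form the second difference $G_\gamma$, and then proving the refined pointwise estimate Lemma~\ref{pwupper}, which simultaneously supplies a $\frac{1}{1+x^2}$ decay in $x$ and an extra small factor $\epsilon \sim A_\gamma/n(T)$ (a genuine gain for the abundant on-line zeros). With both pieces in hand, the proof reduces to a single application of the $L^k$ zero density estimate (Lemma~\ref{Lkzerodens}) with the test function $f(\xi) = \tfrac{\xi}{n(T)}(1+\xi/2\pi)e^{\xi/(8k)}$, which is where the dyadic structure you hand-wave toward is actually absorbed. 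To repair your argument you would need to (i) correct the support width and redo the exponent balance; (ii) establish the decay in $x$, essentially re-deriving Lemma~\ref{pwupper}; and (iii) replace the informal dyadic/telescoping step by a rigorous application of a localized $L^k$ density estimate of the type in Lemma~\ref{Lkzerodens}.
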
 

\textbf{Step 3:} One may make use of the approach of \cite{Ro} to see that the linear statistics zeros against $\check{K}_{n(T)}\ast \eta$ are not very far from those against $\eta$ itself.

\begin{lemma}
\label{Step3}
Let $\sigma, k, \eta, n(T),$ and $K$ be fixed as in Lemma \ref{Step1}. 
\begin{align}
\label{trunc_diff}
\notag \mathcal{E}_2:=&\int_\mathbb{R} \frac{\sigma(t/T)}{T}\bigg( \sum_\gamma \Big|\eta(\Big(\frac{\log T}{2\pi n(T)}(\gamma_0-t)\Big) - \check{K}_{n(T)}\ast\eta(\Big(\frac{\log T}{2\pi n(T)}(\gamma_0-t)\Big)\Big|\bigg)^k\,dt \\
&\ll 1
\end{align}
\end{lemma}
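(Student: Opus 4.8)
The plan is to reduce the claim to the elementary bounded‑variation smoothing estimate for $\eta$ and then to control the resulting zero‑counts \emph{on average} via Fujii's theorem, in the spirit of \cite{Ro}. Write $\psi := \eta - \check K_{n(T)}\ast\eta$ and $v_\gamma := \tfrac{\log T}{2\pi n(T)}(\gamma_0-t)$, so that $\mathcal E_2 = \int_{\mathbb R}\tfrac{\sigma(t/T)}{T}\big(\sum_\gamma|\psi(v_\gamma)|\big)^k\,dt$. Since $\check K_{n(T)}(y)=n(T)\check K(n(T)y)$ integrates to $K(0)=1$, one has $\psi(u)=\int\check K_{n(T)}(y)\big(\eta(u)-\eta(u-y)\big)\,dy$; bounding $|\eta(u)-\eta(u-y)|$ by the $\eta$‑variation of the interval $[u-|y|,u+|y|]$ and substituting $y=w/n(T)$ gives the pointwise estimate
\[
|\psi(u)| \le \int_{\mathbb R}|\check K(w)|\;|d\eta|\big([u-|w|/n(T),\,u+|w|/n(T)]\big)\,dw .
\]
Summing over $\gamma$, using Tonelli twice, and then applying Jensen's inequality against the finite measures $|\check K(w)|\,dw$ and $|d\eta|(s)$ (finite because $\check K$ is Schwartz and $\eta$ is of bounded variation) reduces the desired bound $\mathcal E_2\ll 1$ to the single estimate
\[
\int_{\mathbb R}\frac{\sigma(t/T)}{T}\,\#\big\{\gamma:\;|v_\gamma-s|\le|w|/n(T)\big\}^k\,dt \;\ll_k\; (1+|w|)^k ,
\]
uniformly for $s$ in the (compact) support of $d\eta$ and for all $w$, all $n(T)$, and all large $T$; indeed this then yields $\mathcal E_2 \ll_k \|\check K\|_1^{k-1}\,\|d\eta\|^{k}\int_{\mathbb R}|\check K(w)|(1+|w|)^k\,dw \ll_k 1$.

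To prove this moment estimate, observe that $\{\gamma:|v_\gamma-s|\le|w|/n(T)\}$ is precisely the set of zeros whose ordinate lies in an interval $[b_-,b_+]$ of length $h:=\tfrac{4\pi|w|}{\log T}$ centered at $t+\tfrac{2\pi n(T)s}{\log T}$ --- crucially, the factor $n(T)$ cancels. By the relation \eqref{SofT} and Stirling's estimate \eqref{Stirlings} for $\Omega$, for $t$ of order $T$ this count equals
\[
N(b_+)-N(b_-) = \frac{h\log T}{2\pi}+O(1)+\big(S(b_+)-S(b_-)\big) = O(1+|w|)+\big(S(b_+)-S(b_-)\big),
\]
so it is enough to bound the $k$‑th moment of $S(b_+)-S(b_-)$ against $\tfrac{\sigma(t/T)}{T}\,dt$. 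Split the $t$‑integral dyadically in $|t|$; using the quadratic decay of $\sigma$ to absorb the measure of each shell $|t|\asymp 2^jT$, on each shell one passes from the $L^k$ to the $L^{2\lceil k/2\rceil}$ average by H\"older and invokes Fujii's Theorem \ref{Fuj1} with $H\asymp 2^jT$ and this $h$ (the translation $t\mapsto b_-$ putting the integral into Fujii's form). Since $h\log(2^jT)\ll|w|$ in the relevant range, Fujii's theorem bounds the average by $O_k\big((1+\log(2+|w|))^k\big)$, and summing the geometric series in $j$ gives the claim; the ranges of $|w|$ and $|t|$ for which Fujii's hypotheses ($T^{1/2+a}\le H\le T$ and $h\le H-(H/\sqrt T)^{1/8}$) fail contribute negligibly, being handled by the crude pointwise count $N(b_+)-N(b_-)\ll (h+1)\log(|t|+2)$ from Corollary \ref{ptwise} together with the rapid decay of $\check K$ and $\sigma$.

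The essential difficulty --- and the only place where dropping the Riemann Hypothesis matters --- is that we have no pointwise control on the clumping of zeros in intervals shorter than $1/\log T$, so $\#\{\gamma:|v_\gamma-s|\le|w|/n(T)\}$ simply cannot be bounded for individual $t$. The whole point of passing through \eqref{SofT} to Fujii's moment theorem is that such clumping is controlled \emph{on average}, and this averaged input replaces the pointwise spacing information one would get for free under RH. The remaining work --- organizing the dyadic decomposition, handling the tails of $\sigma$ and of $\check K$, and checking Fujii's admissible ranges --- is routine but must be carried out.
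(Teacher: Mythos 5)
Your proposal is correct and gets to the same conclusion, but the route is genuinely different from the paper's in its middle stage. The paper applies the pre-built Corollary~\ref{Fuj3} (the Fujii upper bound in terms of the envelope $M_1$ and the tail $\epsilon_T$) directly to the function $g_T = \eta(\cdot/n(T)) - \check K_{n(T)}\ast\eta(\cdot/n(T))$, and then bounds $\|M_1 g_T\|_{L^1}$ by importing Lemma~\ref{trunc_env} from \cite{Ro} and bounds $\epsilon_T(g_T)\log T$ by a tail-decay argument. You instead unwind the smoothing error explicitly: writing $\psi(u) = \int \check K_{n(T)}(y)\big(\eta(u)-\eta(u-y)\big)\,dy$, dominating the increment by the variation measure $|d\eta|$, rescaling, and applying Tonelli plus Jensen against the finite product measure $|\check K(w)|\,dw\otimes|d\eta|(s)$, you reduce everything to a single short-interval moment bound on $\#\{\gamma: |v_\gamma - s|\le|w|/n(T)\}$, which you then prove from Fujii's Theorem~\ref{Fuj1} (together with \eqref{SofT}, \eqref{Stirlings}, and the crude count of Corollary~\ref{ptwise}) by a dyadic decomposition using the quadratic decay of $\sigma$. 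Both arguments ultimately rest on the same input --- Fujii's $S$-moment theorem --- but you effectively re-derive a combination of Corollary~\ref{Fuj3} and Lemma~\ref{trunc_env} in one integrated step, which is more self-contained but less modular. The cancellation of $n(T)$ in the interval length, which you correctly flag as the crucial point, is also the load-bearing observation hidden inside the paper's computation $\|M_1 g_T\|_{L^1} = n(T)\|M_{1/n(T)}(\eta-\check K_{n(T)}\ast\eta)\|_{L^1}$. Two small points worth making precise in a full write-up: (a) for large $j$ in your dyadic decomposition, $h\log(2^jT)$ is $\asymp |w|(1+j/\log T)$ rather than $\ll |w|$, so Fujii's bound on the $j$th shell grows slightly in $j$, and one must check that the geometric factor from $\sigma$'s quadratic decay absorbs this; (b) your moment estimate can alternatively be obtained by slicing the interval of length $4\pi|w|/\log T$ into $O(1+|w|)$ intervals of length $2\pi/\log T$ and multilinearizing with Corollary~\ref{Fuj2}, which would spare you from re-running the dyadic argument that the paper has already carried out once in Section~\ref{4}.
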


\textbf{Step 4:} We verify the following computation.
\begin{lemma}
\label{Step4}
Let $\sigma, k, \eta, n(T)$ and $K$ be fixed as in Lemma \eqref{Step1}. Then
\begin{align}
\label{Harmfun_diff}
\notag \mathcal{E}_3 &:= \int_\mathbb{R} \frac{\sigma(t/T)}{T} \bigg| \int_{-\infty}^\infty \check{K}_{n(T)}\ast \eta\Big(\frac{\log T}{2\pi n(T)}(\xi-t)\Big)\frac{\Omega(\xi)}{2\pi}\,d\xi - n(T) \int_{-\infty}^\infty \eta(y)\,dy\bigg|^k \, dt \\
&\ll 1
\end{align}
\end{lemma}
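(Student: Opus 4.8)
The quantity $\mathcal{E}_3$ measures the distance between two deterministic approximations to the density of zeros: the smoothed archimedean term $\int \check{K}_{n(T)}\ast\eta(\cdot)\,\Omega(\xi)/2\pi\,d\xi$ coming from the explicit formula, and the leading-order prediction $n(T)\int\eta$. The plan is to substitute the Stirling expansion \eqref{Stirlings} for $\Omega(\xi)/2\pi$ and change variables to $y = \tfrac{\log T}{2\pi n(T)}(\xi-t)$, so that $\xi = t + \tfrac{2\pi n(T)}{\log T}y$. Under this substitution $d\xi = \tfrac{2\pi n(T)}{\log T}\,dy$, and the main term becomes
\begin{equation*}
\int_{-\infty}^\infty \check{K}_{n(T)}\ast\eta(y)\,\frac{\log\big((|\xi|+2)/2\pi\big)}{2\pi}\,\frac{2\pi n(T)}{\log T}\,dy.
\end{equation*}
Since $t\in[\tfrac{T}{?},\infty)$ is of size $\asymp T$ (recall $\sigma$ is supported away from $0$, or one truncates to $t\asymp T$), and $\check{K}_{n(T)}\ast\eta$ is concentrated on $y = O(1)$ up to rapidly decaying tails — because $\check K_{n(T)}(x) = n(T)\check K(n(T)x)$ is an approximate identity and $\eta$ has compact support — we have $\log(|\xi|+2) = \log T + O(\log\log T / \ ?)$ more precisely $\log\xi = \log t + \log(1 + O(n(T)|y|/\log T)) = \log t + O(n(T)|y|/\log T)$, and $\log t = \log T + O(1)$. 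Hence the factor $\log((|\xi|+2)/2\pi)/\log T$ equals $1 + O\big(1/\log T\big) + O\big(n(T)|y|/\log T\big)$ pointwise, and the main term is $n(T)\int \check{K}_{n(T)}\ast\eta(y)\,dy$ plus errors.

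Next I would handle the three resulting error contributions. First, $\int\check{K}_{n(T)}\ast\eta(y)\,dy = \hat K(0)\int\eta = \int\eta$ exactly (since $K(0)=1$ means $\int\check K = 1$... wait, $\check K(0) = \int \hat{\ } $; more carefully, $\widehat{\check K_{n(T)}\ast\eta}(0) = \check K_{n(T)}\widehat{\ }(0)\cdot\hat\eta(0)$, and $\widehat{\check K_{n(T)}}(0) = K_{n(T)}(0) = K(0) = 1$), so this piece reproduces $n(T)\int\eta$ with no error at all and cancels the subtracted term. Second, the $O(1/\log T)$ relative error contributes $O(n(T)/\log T)\cdot\|\check K_{n(T)}\ast\eta\|_{1} = o(1)$ after raising to the $k$-th power and integrating against $\sigma(t/T)/T$ (which has mass $1$). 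Third, the $O(n(T)|y|/\log T)$ term, once multiplied by $n(T)$, gives $O(n(T)^2/\log T)\int |y|\,|\check K_{n(T)}\ast\eta(y)|\,dy$; here one needs $\int|y|\,|\check K_{n(T)}\ast\eta(y)|\,dy \ll 1$ uniformly in $n(T)$, which follows because $\check K$ is Schwartz (rapid decay, $K$ being a bump function) and $\eta$ has compact support, so $\check K_{n(T)}\ast\eta$ has tails decaying faster than any polynomial with implied constants uniform in $n(T)\geq 1$; thus this piece is $o(1)$ provided $n(T)^2 = o(\log T)$ — hmm, but the hypothesis only gives $n(T) = o(\log T)$, so I should be more careful and extract a genuinely better bound, namely $\log\xi - \log t = \log(1 + \tfrac{2\pi n(T)y}{\log T \cdot t})$ where $t\asymp T$, so the relative perturbation is actually $O(n(T)|y|/(T\log T))$, which is negligible; the only non-negligible shift is $\log t$ vs $\log T$, of size $O(1/\log T)$. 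Finally one must also control the $O(1/(|\xi|+2)) = O(1/T)$ error term from \eqref{Stirlings} itself, which similarly contributes $o(1)$. I would then absorb the difference between the smoothed average against $\sigma(t/T)/T$ and confirm the $\ll 1$ bound (indeed $o(1)$).

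The main obstacle is bookkeeping the interaction between the scale $n(T)$ of the mollifier $\check K_{n(T)}$ and the rate at which $\Omega(\xi)/2\pi$ varies: one must verify that $\check K_{n(T)}\ast\eta$, although it spreads out on the $y$-scale as $n(T)\to\infty$ (its "width" in the original $\xi$ variable is $\asymp 2\pi/\log T \cdot $ fixed, independent of $n(T)$ since $\check K_{n(T)}$ concentrates), still has uniformly bounded first absolute moment $\int|y|\,|\check K_{n(T)}\ast\eta(y)|\,dy$, and — more subtly — that the worst-case error after taking the $k$-th power is genuinely $o(1)$ rather than merely $O(1)$. The saving grace is that $t$ ranges over $[\,T, 2T\,]$ (or the support of $\sigma$, which is bounded away from the origin), so $\log\xi = \log T + O(1)$ throughout, and the relative error in replacing $\Omega(\xi)/2\pi$ by $\log T/2\pi$ is $O(1/\log T)$, exactly the factor needed to beat the $n(T) = o(\log T)$ growth. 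Everything else is a routine application of the triangle inequality, Minkowski's inequality in $L^k(\sigma(t/T)\,dt/T)$, and the rapid decay of $\check K$.
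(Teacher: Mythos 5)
Your approach is correct in substance but takes a genuinely different route from the paper. The paper decomposes $\check K_{n(T)}\ast\eta = \eta + (\check K_{n(T)}\ast\eta - \eta)$, treats the $\eta$ piece by a direct computation against $\Omega$ (using compact support), and controls the difference piece via the $L^1$ truncation estimates (Lemmas~\ref{trunc1} and \ref{trunc2}), which yield a bound of size $O(\log(|t|+2)/\log T)$; the paper then averages this pointwise bound against $\sigma(t/T)/T$. You instead work directly with $\check K_{n(T)}\ast\eta$, exploiting the exact identity $\int \check K_{n(T)}\ast\eta = \int\eta$ (valid because $\widehat{\check K_{n(T)}}(0) = K(0)=1$) together with the uniform bounds $\|\check K_{n(T)}\ast\eta\|_{L^1}\ll 1$ and $\int|y|\,|\check K_{n(T)}\ast\eta(y)|\,dy\ll 1$, the latter following from the Schwartz decay of $\check K$ and the compact support of $\eta$. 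This sidesteps Lemmas~\ref{trunc1}--\ref{trunc2} entirely, and in fact appears to yield the stronger conclusion $\mathcal{E}_3 = o(1)$, whereas the paper settles for $\mathcal{E}_3 \ll 1$.

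The one concrete gap is your treatment of the tails in $t$. You write ``$\sigma$ is supported away from $0$, or one truncates to $t\asymp T$,'' but $\sigma$ is \emph{not} compactly supported: it is only required to have quadratic decay. Your pointwise estimates such as $\log t = \log T + O(1)$ and the resulting relative error $O(1/\log T)$ are valid only for $t$ comparable to $T$. For $|t|/T$ large (or small), the factor $\log(|t|+2)/\log T - 1$ grows like $\log(|t|/T)/\log T$, so the integrand grows like $\big(n(T)\log(|t|/T)/\log T\big)^k$. One must then verify that integrating this against $\sigma(t/T)/T$, using the bound $\sigma(u)\ll 1/(1+u^2)$, yields a convergent and $o(1)$ (or at least $O(1)$) contribution --- this is precisely the computation the paper records as $\int \sigma(t/T)/T\cdot(\log(|t|+2)/\log T)^k\,dt = 1 + O(1/\log T)$. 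The step is not difficult, but it is where the quadratic decay of $\sigma$ is actually used, and it must be written out rather than waved away.
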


\textbf{Step 5:} Using what we have proved in the above steps, we are able to demonstrate the following:
\begin{lemma}
\label{Step5_1}
Let $\sigma, k, \eta,$ and $n(T)$ be fixed as in Lemma \ref{Step1}. Then
\begin{multline}
\label{smoothed_averages1}
\int_\mathbb{R} \frac{\sigma(t/T)}{T} \bigg( \sum_\gamma \eta\Big(\frac{\log T}{2\pi n(T)}(\gamma_0-t)\Big) - n(T) \int_\mathbb{R} \eta(y)\,dy\bigg)^k \, dt \\
 = (c_k + o(1))\bigg(\int_{-n(T)}^{n(T)} |x||\hat{\eta}(x)|^2\,dx\bigg)^{k/2},
\end{multline}
as long as the integral $\int |x||\hat{\eta}(x)|^2\,dx$ diverges.
\end{lemma}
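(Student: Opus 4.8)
The strategy is to deduce \eqref{smoothed_averages1} from Lemmas \ref{Step1}--\ref{Step4} by comparing $k$-th moments, the comparison being made via the binomial theorem and Hölder's inequality for the probability measure $\sigma(t/T)\,dt/T$ (of total mass $1$ since $\sigma$ has mass $1$). Write $\mathcal{V} = \mathcal{V}(T) := \int_{-n(T)}^{n(T)} |x|\,|\hat\eta(x)|^2\,dx$, which diverges by hypothesis. Let $W = W(t,T)$ be the quantity raised to the $k$-th power on the left of \eqref{smoothed_averages1}, let $Z = Z(t,T)$ be the quantity raised to the $k$-th power on the left of \eqref{harmonic_szego}, and introduce the intermediate statistic
\[
Z' := \sum_\gamma \Re\,\check{K}_{n(T)}\ast\eta\Big(\tfrac{\log T}{2\pi n(T)}(\gamma_0-t)\Big) - \int_{-\infty}^\infty \check{K}_{n(T)}\ast\eta\Big(\tfrac{\log T}{2\pi n(T)}(\xi-t)\Big)\tfrac{\Omega(\xi)}{2\pi}\,d\xi,
\]
with real parts understood throughout as in Lemma \ref{Step2_1}. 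For fixed $(t,T)$ each zero-sum here is finite because $\eta$ has compact support (Corollary \ref{ptwise}), and by Lemma \ref{Step2_1} — together with the rapid decay of $\check{K}_{n(T)}\ast\eta$, which beats the logarithmic growth of $\Omega$ — the two constituent pieces of $Z$ and of $Z'$ each converge absolutely, so $Z$ and $Z'$ may be split into their zero-sum and $\Omega$-integral halves and rearranged termwise. Lemma \ref{Step1} is precisely the assertion that $\int_\mathbb{R}\sigma(t/T)Z^k\,dt/T = (c_k + o(1))\mathcal{V}^{k/2}$, and the goal is the same identity with $Z$ replaced by $W$.

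First I would bound $\|W - Z\|_{L^k} \ll 1$, where $\|\cdot\|_{L^k}$ is the norm of $L^k(\sigma(t/T)\,dt/T)$. Splitting $W - Z = (W - Z') + (Z' - Z)$, bounding each zero-sum of differences termwise by the sum of absolute differences, and applying Minkowski's inequality,
\[
\|W - Z\|_{L^k} \le \|W - Z'\|_{L^k} + \|Z' - Z\|_{L^k} \le \mathcal{E}_2^{1/k} + \mathcal{E}_3^{1/k} + \mathcal{E}_1^{1/k},
\]
where $\mathcal{E}_2^{1/k}$ governs replacing $\check{K}_{n(T)}\ast\eta$ by $\eta$ (Lemma \ref{Step3}, \eqref{trunc_diff}), $\mathcal{E}_3^{1/k}$ governs replacing the $\Omega$-integral by $n(T)\int_\mathbb{R}\eta$ (Lemma \ref{Step4}, \eqref{Harmfun_diff}), and $\mathcal{E}_1^{1/k}$ governs replacing the complex arguments $\gamma_0 - iA_\gamma/\log T$ by the ordinates $\gamma_0$ (Lemma \ref{Step2_2}, \eqref{harmonic_diff}); since $\mathcal{E}_1, \mathcal{E}_2, \mathcal{E}_3 \ll 1$ this gives $\|W - Z\|_{L^k}\ll 1$. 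Separately, applying Lemma \ref{Step1} at an even exponent $k'\ge k$ together with the power mean (Jensen) inequality for the probability measure yields $\int_\mathbb{R}\sigma(t/T)|Z|^k\,dt/T \le \big(\int_\mathbb{R}\sigma(t/T)Z^{k'}\,dt/T\big)^{k/k'} \ll \mathcal{V}^{k/2}$, that is, $\|Z\|_{L^k}\ll\mathcal{V}^{1/2}$.

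Finally I would expand $W^k = \big(Z + (W-Z)\big)^k = \sum_{j=0}^k \binom{k}{j}\,Z^{k-j}(W-Z)^j$ and integrate against $\sigma(t/T)\,dt/T$. The $j=0$ term is $(c_k + o(1))\mathcal{V}^{k/2}$ by Lemma \ref{Step1}; for $1\le j\le k$, Hölder's inequality with conjugate exponents $k/(k-j)$ and $k/j$ bounds the $j$-th term in absolute value by $\binom{k}{j}\,\|Z\|_{L^k}^{k-j}\,\|W-Z\|_{L^k}^{j} \ll \mathcal{V}^{(k-j)/2} = o(\mathcal{V}^{k/2})$ since $\mathcal{V}\to\infty$ — which is also precisely what handles the odd values of $k$, where $c_k = 0$. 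Summing the finitely many terms gives $\int_\mathbb{R}\sigma(t/T)W^k\,dt/T = (c_k + o(1))\mathcal{V}^{k/2}$, which is \eqref{smoothed_averages1}.

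All the genuine content of this reduction is already contained in Lemmas \ref{Step1}--\ref{Step4}, so the only place I expect to need a little care is the appeal to the absolute convergence of Lemma \ref{Step2_1}, which is what legitimizes splitting $Z$ and $Z'$ into their zero-sum and $\Omega$-integral halves and makes the telescoping comparison from $W$ through $Z'$ to $Z$ meaningful term by term. Everything else is the Minkowski/Hölder/binomial bookkeeping above, together with the trivial observation that, because $\mathcal{V}\to\infty$, any error of size $O(\mathcal{V}^{(k-j)/2})$ with $j\ge 1$ — in particular any $O(1)$ error — is negligible against the main term, which is of order $\mathcal{V}^{k/2}$.
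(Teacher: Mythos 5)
Your proposal is correct and follows essentially the same route as the paper's proof: telescope from the target statistic to the one controlled by Lemma \ref{Step1} via the error terms of Lemmas \ref{Step2_2}, \ref{Step3}, and \ref{Step4}, expand the $k$-th power, and use H\"older (the paper phrases it as Cauchy--Schwarz) together with the divergence of $\int|x||\hat\eta(x)|^2\,dx$ to kill all cross terms. Your grouping of the three $O(1)$ error pieces into a single difference $W-Z$ before applying the binomial theorem is a mild organizational simplification of the paper's direct multinomial expansion of the four-piece decomposition, but the underlying mechanism is identical.
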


\begin{corollary}
\label{Step5_2}
Let $k, \eta,$ and $n(T)$ be fixed as in Lemma \ref{Step1}. Then for any non-negative $\sigma$ with quadratic decay and a compactly supported Fourier transform, there exists a $T_0$ such that for $T \geq T_0$,
\begin{multline}
\label{smoothed_averages2}
\int_\mathbb{R} \frac{\sigma(t/T)}{T} \bigg| \sum_\gamma \eta\Big(\frac{\log T}{2\pi n(T)}(\gamma_0-t)\Big) - n(T) \int_\mathbb{R} \eta(y)\,dy\bigg|^k \, dt \\
 \ll \|\sigma\|_{L^1(dt)}\bigg(\int_{-n(T)}^{n(T)} |x||\hat{\eta}(x)|^2\,dx\bigg)^{k/2},
\end{multline}
as long as the integral $\int |x||\hat{\eta}(x)|^2\,dx$ diverges. Here $T_0$ may depend on $\sigma$ (as well as $k, \eta,$ and $n(T)$), but the implied constant has no dependence on $\sigma$.
\end{corollary}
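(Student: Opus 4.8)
The plan is to deduce the corollary from Lemma \ref{Step5_1} by first normalizing $\sigma$ to have unit mass and then invoking a convexity (power mean) inequality to absorb the absolute value, in particular for odd $k$. If $\sigma \equiv 0$ both sides of \eqref{smoothed_averages2} vanish, so we may assume $\|\sigma\|_{L^1(dt)} > 0$ and set $\tilde\sigma := \sigma/\|\sigma\|_{L^1(dt)}$. Multiplying by a positive constant preserves non-negativity, quadratic decay, and the support of the Fourier transform, so $\tilde\sigma$ is a legitimate choice of the function ``$\sigma$'' in Lemma \ref{Step1} (hence in Lemma \ref{Step5_1}); moreover $\int_{\mathbb{R}}\tilde\sigma=1$, so after the change of variables $u=t/T$ the measure $\frac{\tilde\sigma(t/T)}{T}\,dt$ is a probability measure on $\mathbb{R}$. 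Write $X=X(t,T):=\sum_\gamma\eta\big(\tfrac{\log T}{2\pi n(T)}(\gamma_0-t)\big)-n(T)\int_{\mathbb{R}}\eta(y)\,dy$ for the quantity being averaged.

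Next I would apply the power mean inequality (equivalently, Jensen's inequality for the convex function $u\mapsto u^2$, or Cauchy--Schwarz) with respect to the probability measure $\frac{\tilde\sigma(t/T)}{T}\,dt$, obtaining
$$
\int_{\mathbb{R}}\frac{\tilde\sigma(t/T)}{T}\,|X|^k\,dt\ \le\ \bigg(\int_{\mathbb{R}}\frac{\tilde\sigma(t/T)}{T}\,|X|^{2k}\,dt\bigg)^{1/2}.
$$
Since $2k$ is a positive (even) integer, $|X|^{2k}=X^{2k}$, so Lemma \ref{Step5_1} — applied with $\tilde\sigma$ in place of $\sigma$ and with exponent $2k$, and using that $\int|x||\hat{\eta}(x)|^2\,dx$ diverges — shows the right-hand side equals $\big(c_{2k}+o(1)\big)^{1/2}\big(\int_{-n(T)}^{n(T)}|x||\hat{\eta}(x)|^2\,dx\big)^{k/2}$ as $T\to\infty$. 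The average on the left of the displayed inequality is non-negative, so $c_{2k}+o(1)\ge 0$; hence there is a threshold $T_0$ — depending on $\tilde\sigma$ (thus on $\sigma$) as well as on $k,\eta,n(T)$ — beyond which $(c_{2k}+o(1))^{1/2}\le (c_{2k}+1)^{1/2}$.

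Finally I would undo the normalization: for $T\ge T_0$,
$$
\int_{\mathbb{R}}\frac{\sigma(t/T)}{T}\,|X|^k\,dt=\|\sigma\|_{L^1(dt)}\int_{\mathbb{R}}\frac{\tilde\sigma(t/T)}{T}\,|X|^k\,dt\ \le\ (c_{2k}+1)^{1/2}\,\|\sigma\|_{L^1(dt)}\bigg(\int_{-n(T)}^{n(T)}|x||\hat{\eta}(x)|^2\,dx\bigg)^{k/2},
$$
which is precisely \eqref{smoothed_averages2}, with implied constant $(c_{2k}+1)^{1/2}$ depending only on $k$. I do not anticipate a genuine difficulty here; the one point requiring care is bookkeeping the two distinct roles in the statement — the threshold $T_0$ is permitted to depend on $\sigma$ (it is inherited from the soft $o(1)$ in Lemma \ref{Step5_1}), whereas the implied constant must be independent of $\sigma$, and normalizing $\sigma$ to unit mass before calling Lemma \ref{Step5_1} is exactly what decouples these. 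One could alternatively split into even and odd $k$, using Lemma \ref{Step5_1} directly when $k$ is even and only passing to the $2k$-th moment when $k$ is odd, but funneling both cases through the power mean inequality removes the need for a case distinction.
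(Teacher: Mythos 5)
Your proof is correct and follows essentially the same approach as the paper: normalize $\sigma$ to unit mass, deduce the bound from Lemma \ref{Step5_1}, and undo the normalization, with $T_0$ absorbing the $\sigma$-dependent $o(1)$. The paper treats even $k$ directly and odd $k$ via Cauchy--Schwarz, whereas you funnel both cases through the power-mean inequality to $2k$; this is a cosmetic unification, not a different argument.
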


\textbf{Step 6:} The equation \eqref{smoothed_averages1} is very nearly \eqref{moments}, except that we must replace $\sigma$ of the sort delimited above by the function $\mathbf{1}_{[1,2]}$. This is accomplished by a more or less standard argument, approximating $\mathbf{1}_{[1,2]}$ by such $\sigma$.

\section{Upper bounds on counts of zeros}
\label{4}

It will be convenient to define the function
$$
Q(x):= \frac{1}{\pi}\frac{1}{(1+x^2)},
$$
which has mass $1$, and the norms
$$
\|\sigma\|_Q := \pi\,\sup_{x\in\mathbb{R}}\;(1+x^2)|\sigma(x)|.
$$

Theorem \ref{Fuj1} of Fujii serves as an upper bound for us in the following way:
\begin{corollary} 
\label{Fuj2}
For $T \geq 2$,
$$
\int_{\mathbb{R}} \frac{\sigma(t/T)}{T}\Big| N\Big(t+\frac{2\pi(\ell+1)}{\log T}\Big) - N\Big(t + \frac{2\pi \ell}{\log T}\Big)\Big|^k\, dt \ll_k \|\sigma\|_Q,
$$
uniformly for $|\ell| \leq \sqrt{T}$.
\end{corollary}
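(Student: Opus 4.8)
The plan is to start from the identity \eqref{SofT}, which with the abbreviation $h_j := 2\pi j/\log T$ gives
$$
N\Big(t+\tfrac{2\pi(\ell+1)}{\log T}\Big) - N\Big(t+\tfrac{2\pi\ell}{\log T}\Big) = \int_{t+h_\ell}^{t+h_{\ell+1}} \frac{\Omega(\xi)}{2\pi}\,d\xi + \big(S(t+h_{\ell+1}) - S(t+h_\ell)\big),
$$
and then to bound the $k$-th moment of each summand against $\sigma(t/T)/T$ separately, using $(|a|+|b|)^k \ll_k |a|^k+|b|^k$. For the first summand, Stirling's estimate \eqref{Stirlings} shows the integrand is $O(\log(|\xi|+2))$ on an interval of length $2\pi/\log T$ on which $|\xi| \le |t| + O(\sqrt T/\log T)$ (here $|\ell|\le\sqrt T$ enters), so this summand is $\ll 1 + \log(2+|t|/T)$; its contribution is therefore $\ll_k \int_\mathbb{R}\sigma(u)\,(1+\log(2+|u|))^k\,du \ll_k \|\sigma\|_Q$, using $|\sigma(u)| \le \|\sigma\|_Q\,Q(u)$ and the integrability of $Q(u)(1+\log(2+|u|))^k$.

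For the $S$-summand, since $|x|^k \le 1 + x^{2k}$ for all real $x$ (the contribution of the ``$1$'' being again $\le\|\sigma\|_Q$), it suffices to bound $\int_\mathbb{R} \frac{\sigma(t/T)}{T}\big(S(t+h_{\ell+1}) - S(t+h_\ell)\big)^{2k}\,dt$. The key point is that the two ordinates here differ by exactly $h := 2\pi/\log T$, so $h\log T = 2\pi = O(1)$, and Fujii's Theorem \ref{Fuj1} collapses to the clean bound $\int_{T_0}^{T_0+H}\big(S(t+h)-S(t)\big)^{2k}\,dt \ll_k H$, valid for every base point $T_0$ and every window $T_0^{1/2+a}\le H\le T_0$ (fix $a=1/100$, say).

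It remains to assemble these localized estimates against the weight $\sigma(t/T)/T$ over all of $\mathbb{R}$. After the substitution $s = t + h_\ell$ (harmless, as $|h_\ell| \le 2\pi\sqrt T/\log T \ll \sqrt T$ is below every scale that follows), and using the symmetry of the zeros about the real axis to reduce to $s>0$, split $(0,\infty)$ into the range $s \le T^{0.6}$ and the dyadic ranges $I_j := [2^jT^{0.6}, 2^{j+1}T^{0.6})$, $j\ge 0$. On $s\le T^{0.6}$ one has $S(s+h)-S(s) \ll \log T$ by \eqref{SofT}, \eqref{Stirlings}, and $O(1)$ applications of Corollary \ref{ptwise}, while $\sigma(s/T)/T \ll \|\sigma\|_Q/T$, so this range contributes $\ll_k \|\sigma\|_Q\,T^{-0.4}(\log T)^{2k} \ll_k \|\sigma\|_Q$. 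On each $I_j$, cover $I_j$ together with its $O(\sqrt T)$-neighbourhood by $O(1)$ admissible Fujii windows of base point and length $\asymp 2^jT^{0.6}$ to get $\int_{I_j}\big(S(s+h)-S(s)\big)^{2k}\,ds \ll_k 2^jT^{0.6}$, whereas $\sup_{s\in I_j}\sigma(s/T)/T \ll \|\sigma\|_Q\,T^{-1}\big(1+4^jT^{-0.8}\big)^{-1}$; multiplying and summing over $j\ge 0$ yields a geometric-type series bounded by $\ll_k\|\sigma\|_Q$. (For $T$ in any bounded range the whole estimate is immediate.)

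The main obstacle is the bookkeeping in this last step. Fujii's theorem controls only $\int_{T_0}^{T_0+H}$ with $H\le T_0$, which forces a dyadic rather than one-shot decomposition, and one must check at every scale that the fixed window $h=2\pi/\log T$ and the shift $h_\ell$ (with $|\ell|\le\sqrt T$) sit comfortably inside the admissible ranges; moreover it is precisely the quadratic decay encoded in $\|\sigma\|_Q$, rather than mere integrability of $\sigma$, that makes the sum over the far dyadic blocks converge. Once the decomposition is set up, each piece reduces to inserting $h\log T=O(1)$ into Theorem \ref{Fuj1}.
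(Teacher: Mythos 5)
Your proposal is correct and follows essentially the same route as the paper: both proofs feed the fixed shift $h=2\pi/\log T$ (so $h\log T=O(1)$) into Fujii's Theorem \ref{Fuj1} to get local $L^{2k}$ control of $S$-differences, pass to $N$ via \eqref{SofT}, and then sum over a dyadic decomposition using precisely the quadratic decay encoded in $\|\sigma\|_Q$, with the low-height range handled separately via Corollary \ref{ptwise}. The only differences are cosmetic: the paper uses ratio $3/2$ with cutoff at $T$ and passes from even $k$ to all $k$ by Cauchy--Schwarz, where you use ratio $2$ with cutoff at $T^{0.6}$ and the inequality $|x|^k\le 1+x^{2k}$.
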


\begin{proof}
It is easy to see that Theorem \ref{Fuj1} implies for $T\geq 2$,
$$
\int_{(3/2)^n T}^{(3/2)^{n+1} T} \Big| S\Big(t + \frac{2\pi(\ell+1)}{\log T}\Big) - S\Big(t+ \frac{2\pi\ell}{\log T}\Big)\Big|^k\,dt \ll_k (3/2)^n T,
$$
uniformly for $n =0,1,2,...$ and $\ell \leq \sqrt{T}$. (To pass from even $k$, the statement of Theorem \ref{Fuj1} to all $k$, use Cauchy-Schwartz, for instance.) By using the relation \eqref{SofT} between $N(T)$ and $S(T)$, one sees that this implies
$$
\int_{(3/2)^n T}^{(3/2)^{n+1} T} \Big| N\Big(t + \frac{2\pi(\ell+1)}{\log T}\Big) - N\Big(t+ \frac{2\pi\ell}{\log T}\Big)\Big|^k\,dt \ll_k (3/2)^n T,
$$
uniformly for $n$ and $\ell$ in the same range. Moreover, using again Fujii's upper bound, and the pointwise upper bound, Corollary \ref{ptwise} to bound the number of zeros with low height, one sees that
$$
\int_0^T \Big| N\Big(t + \frac{2\pi(\ell+1)}{\log T}\Big) - N\Big(t+ \frac{2\pi\ell}{\log T}\Big)\Big|^k\,dt \ll_k T,
$$
uniformly for $|\ell| \leq \sqrt{T}.$ 

These estimates then imply the corollary, by the vertical symmetry of zeros and the fact that
$$
|\sigma(x)| \leq \|\sigma\|_Q\Big( \mathbf{1}_{[-1,1]}(x) + \sum_{n=0}^\infty \frac{1}{1+[(3/2)^n]^2}\big(\mathbf{1}_{J_n}(x) + \mathbf{1}_{J_n}(-x)\big)\Big),
$$
where for typographical reasons we have defined $J_n = [(3/2)^n, (3/2)^{n+1})$.
\end{proof}

We will ultimately require a slightly more general estimate. We introduce the notation,
\begin{equation}
\label{upper_bound}
M_k\eta(x):= \sum_{\ell \in \mathbb{Z}} \max_{u\in I_\ell(k)}|\eta(u)|\cdot \mathbf{1}_{[\ell,\ell+1)}(x),
\end{equation}
where for notational reasons we denote $I_\ell(k):=[k\ell, k(\ell+1))$, and also the notation,
\begin{equation}
\label{later_bound}
\epsilon_T(\eta):= \sum_{|\ell| > \sqrt{T}} \log(|\ell|+2)\cdot \sup_{u \in [\ell,\ell+1)}|\eta(u)| .
\end{equation}
\begin{corollary}
\label{Fuj3}
For $T \geq 2$,
$$
\int_\mathbb{R} \frac{\sigma(t/T)}{T} \bigg| \sum_\gamma \eta\Big(\frac{\log T}{2\pi}(\gamma_0-t)\Big) \bigg|^k\,dt \ll_k \|\sigma\|_Q \Big(\|M_1 \eta\|^k_{L^1(\mathbb{R})} + (\epsilon_T(\eta) \log T)^k\Big).
$$
\end{corollary}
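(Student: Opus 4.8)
The plan is to split the sum over zeros into unit-length blocks of the rescaled ordinate and estimate each block, using Corollary~\ref{Fuj2} for blocks whose shift is at most $\sqrt T$ and the crude pointwise bound of Corollary~\ref{ptwise} for the remaining blocks. For $\ell\in\mathbb Z$ write $m_\ell:=\sup_{u\in[\ell,\ell+1)}|\eta(u)|$ and let $n_\ell(t)$ be the number of zeros, counted with multiplicity, whose ordinate $\gamma_0$ lies in $[t+2\pi\ell/\log T,\,t+2\pi(\ell+1)/\log T)$; thus $n_\ell(t)=N(t+2\pi(\ell+1)/\log T)-N(t+2\pi\ell/\log T)$ up to the standard interpretation at non-positive heights afforded by the vertical symmetry of the zeros. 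Since $\tfrac{\log T}{2\pi}(\gamma_0-t)\in[\ell,\ell+1)$ forces $|\eta(\tfrac{\log T}{2\pi}(\gamma_0-t))|\le m_\ell$, grouping the zeros by block gives the pointwise bound
$$
\Big|\sum_\gamma\eta\Big(\tfrac{\log T}{2\pi}(\gamma_0-t)\Big)\Big|\le\sum_{\ell\in\mathbb Z}m_\ell\,n_\ell(t).
$$
I would then split the $\ell$-sum at $|\ell|=\sqrt T$ and, after raising to the $k$th power and integrating, use $(a+b)^k\ll_k a^k+b^k$ to treat the two ranges separately.

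For the central range $|\ell|\le\sqrt T$, note that $d\mu(t):=\sigma(t/T)\,dt/T$ is a positive measure, so Minkowski's inequality gives
$$
\Big\|\sum_{|\ell|\le\sqrt T}m_\ell\,n_\ell\Big\|_{L^k(d\mu)}\le\sum_{|\ell|\le\sqrt T}m_\ell\,\|n_\ell\|_{L^k(d\mu)}.
$$
By Corollary~\ref{Fuj2} one has $\|n_\ell\|_{L^k(d\mu)}^k\ll_k\|\sigma\|_Q$ \emph{uniformly} in $\ell$ with $|\ell|\le\sqrt T$, hence $\|n_\ell\|_{L^k(d\mu)}\ll_k\|\sigma\|_Q^{1/k}$; summing $m_\ell$ over all $\ell$ produces $\|M_1\eta\|_{L^1(\mathbb R)}$ by \eqref{upper_bound}. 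Raising to the $k$th power, the contribution of the central range to the left-hand side of the corollary is $\ll_k\|\sigma\|_Q\|M_1\eta\|_{L^1(\mathbb R)}^k$. This is the heart of the matter: because Corollary~\ref{Fuj2} is uniform in the shift, the number of blocks is irrelevant and only the mass $\|M_1\eta\|_{L^1(\mathbb R)}$ enters.

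For the tail $|\ell|>\sqrt T$, each block has length $2\pi/\log T\le 2\pi/\log 2<10$, so it is covered by $O(1)$ unit intervals and Corollary~\ref{ptwise} gives $n_\ell(t)\ll\log(|t+2\pi\ell/\log T|+2)\ll\log(|t|+2)+\log(|\ell|+2)$ uniformly for $T\ge2$. Using $\log(|\ell|+2)\ge\log(\sqrt T+2)$ when $|\ell|>\sqrt T$ to bound $\sum_{|\ell|>\sqrt T}m_\ell\le\epsilon_T(\eta)/\log(\sqrt T+2)$, and recognizing $\sum_{|\ell|>\sqrt T}m_\ell\log(|\ell|+2)=\epsilon_T(\eta)$ from \eqref{later_bound}, one gets
$$
\sum_{|\ell|>\sqrt T}m_\ell\,n_\ell(t)\ll\epsilon_T(\eta)\Big(1+\frac{\log(|t|+2)}{\log(\sqrt T+2)}\Big).
$$
Raising to the $k$th power, integrating against $d\mu$, and changing variables $t=Ts$, the remaining integral $\int_\mathbb R\sigma(s)\big(1+\log(|Ts|+2)/\log(\sqrt T+2)\big)^k\,ds$ is $\ll_k\|\sigma\|_Q$, since $\log(|Ts|+2)/\log(\sqrt T+2)\ll 1+\log(|s|+2)$ uniformly in $T\ge2$, $|\sigma(s)|\le\|\sigma\|_Q\,Q(s)$, and $\int Q(s)(\log(|s|+2))^k\,ds<\infty$. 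Hence the tail contributes $\ll_k\|\sigma\|_Q\,\epsilon_T(\eta)^k\le\|\sigma\|_Q(\epsilon_T(\eta)\log T)^k$ (using $\log T\ge\log 2$), and adding the two ranges yields the corollary.

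The only genuine difficulty is the central range, where one must exploit the uniformity of Corollary~\ref{Fuj2} in the shift parameter inside a Minkowski (triangle-inequality) argument. Everything else — the crude pointwise tail bound, the elementary decay estimate for the $t$-integral, and the minor bookkeeping at small $T$ and at non-positive heights — is routine; in fact the argument gives a little more than is claimed, since the factor $\log T$ in the error term is not actually needed.
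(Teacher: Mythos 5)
Your proof is correct and follows essentially the same route as the paper: the same split of the $\ell$-sum at $|\ell|=\sqrt T$, Corollary~\ref{Fuj2} for the central blocks (your Minkowski-in-$L^k(d\mu)$ argument is equivalent to the paper's expansion of $A^k$ into a multilinear sum followed by H\"older), and Corollary~\ref{ptwise} for the tail. The one difference worth noting is that your tail estimate is slightly sharper — you obtain $\ll_k\|\sigma\|_Q\,\epsilon_T(\eta)^k$ rather than $\ll_k\|\sigma\|_Q(\epsilon_T(\eta)\log T)^k$, because you use the additive crude bound $\log(|t+2\pi\ell/\log T|+2)\ll\log(|t|+2)+\log(|\ell|+2)$ together with $\log(|\ell|+2)\ge\log(\sqrt T+2)$ for $|\ell|>\sqrt T$, whereas the paper uses the multiplicative bound $\ll\log(|t|+2)\cdot\log(|\ell|+2)$ and so picks up an extraneous $\log^k T$; this is harmless in the paper's applications but your version is cleaner.
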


\textit{Remark:} For $\eta = \mathbf{1}_{[\ell,\ell+1)}$, this is just corollary \ref{Fuj2}.

\textit{Remark:} On the Riemann Hypothesis, it was shown in \cite{Ro} that the right hand side of the above bound may be replaced by
$$
\int_{\mathbb{R}} \frac{\sigma(t/T)}{T} \bigg|\int_{-\infty}^\infty M_1\eta\Big(\frac{\log T}{2\pi}(\xi-t)\Big) \;\log(|\xi|+2)\,d\xi\bigg|^k\,dt.
$$
Likely this bound can be recovered unconditionally, but we do not pursue the matter here.

\begin{proof} Note that
\begin{align*}
\sum_\gamma \eta\Big(\frac{\log T}{2\pi}(\gamma_0-t)\Big) \ll& \sum_{|\ell|\leq \sqrt{T}} \sup_{u\in[\ell,\ell+1)} |\eta(u)| \cdot \Big|N\Big(t + \frac{2\pi(\ell+1)}{\log T}\Big) - N\Big(t + \frac{2\pi\ell}{\log T}\Big)\Big| \\
&+ \sum_{|\ell| > \sqrt{T}} \sup_{u \in [\ell,\ell+1)} |\eta(u)|\cdot \log\big(\big|t + \tfrac{2\pi\ell}{\log T}\big| + 2\big) \\
=:& A + B,
\end{align*}
say. We have used the pointwise upper bound, Corollary \ref{ptwise}, in a very trivial manner to obtain the bound for $|\ell| < \sqrt{T}$.

By H\"{o}lder's inequality and Corollary \ref{Fuj2},
$$
\int_\mathbb{R} \frac{\sigma(t/T)}{T} \prod_{i=1}^k \Big|N\Big(t + \frac{2\pi(\ell_i+1)}{\log T}\Big) - N\Big(t + \frac{2\pi\ell_i}{\log T}\Big)\Big| \ll_k \|\sigma\|_Q,
$$
so it follows from expanding $A^k$ into a multilinear sum,
$$
\int_{\mathbb{R}} \frac{\sigma(t/T)}{T} |A|^k\,dt \ll_k \|\sigma\|_Q \bigg(\sum_{\ell \leq \sqrt{T}} \sup_{u\in[\ell,\ell+1)} |\eta(u)|\bigg)^k \leq \|\sigma\|_Q \|M_1 \eta\|_{L^1(\mathbb{R})}^k.
$$
On the other hand, using the very crude estimate $\log(|t +  2\pi\ell/\log T|+2) \ll \log(|t|+2)\cdot \log(|\ell|+2)$ we know that
$$
B \ll \log(|t|+2)\epsilon_T(\eta).
$$
Therefore,
\begin{align*}
\int_\mathbb{R} \frac{\sigma(t/T)}{T} |B|^k\,dt &\ll \epsilon_T(\eta)^k \|\sigma\|_Q \int_\mathbb{R} \frac{Q(t/T)}{T} \log^k(|t|+2)\,dt \\
& \ll_k \epsilon_T(\eta)^k \|\sigma\|_Q \log^k T.
\end{align*}
As $|A+B|^k \ll_k |A|^k + |B|^k$, we obtain our corollary. 
\end{proof}

We also require upper bouds on the number of zeros that lie away from the critical axis, and the bounds we prove all have their origin in the zero density estimate, Theorem \ref{SelbergJutila}. Note that this estimate implies
\begin{equation}
\label{SelbergJutila2}
\frac{1}{T\log T} \sum_{0< \gamma_0 < T} \mathbf{1}_{(\alpha,\infty)}(|A_\gamma|) \ll_c e^{-c\alpha},
\end{equation}
for any constant $c \in (0,1)$, uniformly for $T \geq 2$, where the notation $A_\gamma$ was defined in equation \eqref{A_gamma}.

Note that, by a crude upper bound,
$$
\sum_\gamma Q\Big(\frac{\gamma_0}{T}\Big) \mathbf{1}_{(\alpha,\infty)}(|A_\gamma|) \leq \sum_{n=1}^\infty Q([n-1]^2) \sum_{0<\gamma_0 < n^2 T} \mathbf{1}_{(\alpha,\infty)}(|A_\gamma|),
$$
and moreover that
$$
|A_\gamma(n^2 T)| = \frac{\log n^2 T}{\log T} |A_\gamma(T)| \geq |A_\gamma(T)|,
$$
so that, using \eqref{SelbergJutila2},
\begin{align*}
\sum_\gamma Q\Big(\frac{\gamma_0}{T}\Big) \mathbf{1}_{(\alpha,\infty)}(|A_\gamma|) &\ll_c e^{-c\alpha} \sum_{n=1}^\infty Q([n-1]^2) n^2 T \log n^2 T \\
&\ll e^{-c\alpha} T\log T.
\end{align*}
Integrating in $\alpha$, we obtain for positive piecewise continuous $f$,
\begin{equation}
\label{SelbergJutila3}
\frac{1}{T\log T} \sum_\gamma Q\Big(\frac{\gamma_0}{T}\Big) f(|A_\gamma|) \ll_c \int_0^\infty f(\xi) e^{-c\xi}\,d\xi.
\end{equation}

By combining this with the Fujii upper bound, Corollary \ref{Fuj3}, we shall obtain,

\begin{lemma}[An $L^k$ zero density estimate]
\label{Lkzerodens}
For $f:\mathbb{R}_+\rightarrow\mathbb{R}$ piecewise continuous and for $T \geq 2$ and $1 \leq H \leq T^{1/4}$,
\begin{equation}
\label{Lkzeroeq}
\int_\mathbb{R} \frac{\sigma(t/T)}{T} \bigg|\sum_\gamma f(|A_\gamma|) Q\Big( \frac{\log T}{2\pi H}(\gamma_0-t)\Big)\bigg|^k \ll_c \|\sigma\|_Q H^k \sqrt{\int_0^\infty f(\xi)^{2k} e^{-c\xi}\,d\xi}.
\end{equation}
for any $c \in (0,1)$.
\end{lemma}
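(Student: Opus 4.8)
The proof will play a zero‑density estimate off against the Fujii‑type bound of Corollary~\ref{Fuj3}, after localizing the sum over zeros according to the size of $|A_\gamma|$. Write $S(t)$ for the sum appearing in \eqref{Lkzeroeq} and, for $j\geq 0$, set $f_j:=\sup_{[j,j+1)}|f|$ and $T_j(t):=\sum_{\gamma\,:\,|A_\gamma|\in[j,j+1)}Q\big(\tfrac{\log T}{2\pi H}(\gamma_0-t)\big)$. Since $|f(|A_\gamma|)|\leq f_j$ on the $j$‑th block, one has the pointwise majorization $|S(t)|\leq\sum_{j\geq 0}f_j\,T_j(t)$. Writing $\|\cdot\|_p$ for the norm on $L^p\big(\tfrac{\sigma(t/T)}{T}\,dt\big)$, Minkowski's inequality gives
\[
\|S\|_k\ \leq\ \sum_{j\geq 0}f_j\,\|T_j\|_k ,
\]
so it suffices to bound $\|T_j\|_k$ with enough decay in $j$.

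I would obtain $\|T_j\|_k$ by interpolating two estimates. First, unfolding the $t$‑integral and using the Cauchy‑kernel convolution bound $\int\tfrac{\sigma(t/T)}{T}Q\big(\tfrac{\log T}{2\pi H}(\gamma_0-t)\big)\,dt\ll\|\sigma\|_Q\tfrac{H}{T\log T}Q(\gamma_0/T)$ (legitimate because $H/(T\log T)\ll 1$), followed by \eqref{SelbergJutila3} applied to the indicator $\mathbf{1}_{[j,j+1)}$, yields the $L^1$ bound $\|T_j\|_1\ll_c\|\sigma\|_Q H e^{-cj}$. Second, bounding $T_j(t)\leq\sum_\gamma Q\big(\tfrac{\log T}{2\pi H}(\gamma_0-t)\big)$ and applying Corollary~\ref{Fuj3} with $\eta=Q(\cdot/H)$, for which a short computation gives $\|M_1\eta\|_{L^1}\ll H$ and $\epsilon_T(\eta)\log T\ll H$ (this is where $H\leq T^{1/4}$ enters), yields $\|T_j\|_q\ll_q\|\sigma\|_Q^{1/q}H$ for every integer $q$, with no decay in $j$. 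Log‑convexity of $L^p$‑norms now interpolates these: taking $q=2k$ and $\theta=\tfrac{1}{2k-1}$, so that $\tfrac1k=\theta+\tfrac{1-\theta}{q}$, one finds $\|T_j\|_k\ll_{c,k}\|\sigma\|_Q^{1/k}H\,e^{-c\theta j}$, the exponent of $\|\sigma\|_Q$ being forced to equal $\theta+(1-\theta)/q=1/k$ and that of $H$ to equal $\theta+(1-\theta)=1$.

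Substituting back, $\|S\|_k\ll_{c,k}\|\sigma\|_Q^{1/k}H\sum_{j\geq0}f_j e^{-c\theta j}$. Since $\theta=\tfrac1{2k-1}>\tfrac1{2k}$, Hölder with exponents $2k$ and $\tfrac{2k}{2k-1}$ gives $\sum_j f_j e^{-c\theta j}\ll_{c,k}\big(\sum_j f_j^{2k}e^{-cj}\big)^{1/(2k)}$, the leftover geometric series converging precisely because $c\theta>c/(2k)$. Raising to the $k$‑th power,
\[
\int_\mathbb{R}\frac{\sigma(t/T)}{T}\,|S(t)|^k\,dt\ \ll_{c,k}\ \|\sigma\|_Q H^k\Big(\sum_{j\geq0}f_j^{2k}e^{-cj}\Big)^{1/2},
\]
and it remains to bound $\sum_j f_j^{2k}e^{-cj}$ by $\int_0^\infty f(\xi)^{2k}e^{-c\xi}\,d\xi$.

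I expect this last comparison to be the only genuinely delicate point: it is immediate when $f$ is monotone or slowly varying (as it is in the intended application, where $f(\xi)\asymp\xi$), while in general one should replace $f$ on the right by its $1$‑local maximal function $\tilde f(\xi):=\sup_{|\eta-\xi|\leq1}|f(\eta)|$, since $f_j^{2k}e^{-cj}\leq e^{c}\int_j^{j+1}\tilde f(\xi)^{2k}e^{-c\xi}\,d\xi$. The remaining work---the convolution bound for the $t$‑integral and the verification of the hypotheses of Corollary~\ref{Fuj3} for $\eta=Q(\cdot/H)$---is routine, though the latter uses $H\leq T^{1/4}$ essentially, as does the convolution bound (through $H/(T\log T)\ll1$).
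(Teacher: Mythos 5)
Your proof is correct, but the mechanism is genuinely different from the paper's, and the comparison is instructive. The paper never decomposes in $j$; instead it applies a weighted H\"older inequality directly to the sum over $\gamma$,
\[
\Big(\sum_\gamma f(|A_\gamma|)\,Q_\gamma\Big)^k \leq \Big(\sum_\gamma f(|A_\gamma|)^{2k} Q_\gamma\Big)^{1/2}\Big(\sum_\gamma Q_\gamma\Big)^{k-1/2},
\]
and then Cauchy--Schwarz in $t$; the first factor is handled by the $Q$-weighted zero density estimate \eqref{SelbergJutila3} (yielding $H\int_0^\infty f^{2k}e^{-c\xi}\,d\xi$), and the second by Corollary~\ref{Fuj3} with $\eta=Q_H$ (yielding $H^{2k-1}$). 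That is two lines of interpolation, and the transition from a discrete count over zeros to the continuous $\int f^{2k}e^{-c\xi}$ is entirely absorbed into \eqref{SelbergJutila3}. Your route --- partition by $|A_\gamma|\in[j,j+1)$, Minkowski, then log-convexity of $L^p$ norms between an $L^1$ bound (decaying in $j$ via the density estimate) and an $L^q$ bound (uniform in $j$ via Fujii) --- reaches the same exponents $1$, $k$, $1/2$ on $\|\sigma\|_Q$, $H$, and the integral, and makes it more transparent that the decay comes solely from the $L^1$ input. The cost is the extra step $\sum_j f_j^{2k}e^{-cj}\ll\int_0^\infty f^{2k}e^{-c\xi}\,d\xi$, which you correctly flag as delicate for general piecewise continuous $f$. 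It is worth noting that the paper's proof carries precisely the same hidden delicacy: \eqref{SelbergJutila3} is proved by ``integrating in $\alpha$'' from the level-set bound \eqref{SelbergJutila2}, an argument that is literally valid only for $f$ expressible as a positive superposition of $\mathbf{1}_{(\alpha,\infty)}$, i.e.\ $f$ nondecreasing with $f(0)=0$. In all the applications in the paper (Proposition~\ref{Lkzero_ex} with $f=\mathbf{1}_{(\sigma-1/2,\infty)}$, Lemma~\ref{Step2_2} with $f(\xi)$ of the form $\xi(1+\xi/2\pi)e^{\xi/8}/n(T)$) the function is monotone, so both your version and the paper's close without issue; your local-maximal-function fix is the natural repair if one insists on the lemma literally as stated. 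Your verifications of $\|M_1 Q_H\|_{L^1}\ll H$ and $\epsilon_T(Q_H)\log T\ll H^2\log^2T/\sqrt T = o(H)$ for $H\leq T^{1/4}$ are correct and are exactly where the hypothesis $H\leq T^{1/4}$ enters in the paper's proof as well.
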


\begin{proof} The left hand side of \eqref{Lkzeroeq} is no more than
\begin{align}
\label{Lkzero_1}
\notag &\|\sigma\|_Q \int_\mathbb{R} \frac{Q(t/T)}{T} \bigg| \sum_\gamma f(|A_\gamma|) Q\Big(\frac{\log T}{2\pi H}(\gamma_0-t) \Big)\bigg|^k\,dt \\
\notag &\leq \|\sigma\|_Q \int_\mathbb{R} \frac{Q(t/T)}{T} \bigg| \sum_\gamma f(|A_\gamma|)^{2k} Q\Big(\frac{\log T}{2\pi H}(\gamma_0-t)\Big)\bigg|^{1/2}\cdot \bigg|\sum_\gamma Q\Big(\frac{\log T}{2\pi H}(\gamma_0-t)\Big)\bigg|^{k-1/2}\,dt \\
\notag &\leq \|\sigma\|_Q \bigg(\int_\mathbb{R}\frac{Q(t/T)}{T}\bigg|\sum_\gamma f(|A_\gamma|)^{2k} Q\Big(\frac{\log T}{2\pi H}(\gamma_0-t)\Big)\bigg|\,dt \bigg)^{1/2} \\
&\hspace{20mm}\times \bigg(\int_\mathbb{R}\frac{Q(t/T)}{T}\bigg|\sum_\gamma Q\Big(\frac{\log T}{2\pi H}(\gamma_0-t)\Big)\bigg|^{2k-1}\,dt\bigg)^{1/2},
\end{align}
where we have used H\"{o}lder's inequality twice.

For $H\leq T^{1/4}$ it is easy to see that
$$
\int_\mathbb{R} \frac{Q(t/T)}{T} Q\Big(\frac{\log T}{2\pi H}(\gamma_0-t)\Big)\,dt \ll \frac{H}{T\log T} Q\Big(\frac{\gamma_0}{T}\Big).
$$
Thus
\begin{align}
\label{Lkzero_2}
\notag \int_\mathbb{R} \frac{Q(t/T)}{T}\bigg| \sum_\gamma f(|A_\gamma|)^{2k} Q\Big(\frac{\log T}{2\pi H} (\gamma_0-t)\Big)\bigg|\,dt &\ll \sum_\gamma \frac{H}{T\log T} f(|A_\gamma|)^{2k} Q\Big(\frac{\gamma_0}{T}\Big) \\
& \ll_c H \int_0^\infty f(\xi)^{2k} e^{-c\xi}\,d\xi.
\end{align}

On the other hand, by Corollary \ref{Fuj3},
\begin{align}
\label{Lkzero_3}
\notag \int_\mathbb{R}\frac{Q(t/T)}{T}\bigg|\sum_\gamma Q\Big(\frac{\log T}{2\pi H}(\gamma_0-t)\Big)\bigg|^{2k-1}\,dt &\ll_k H^{2k-1} + (\epsilon_T(Q_H)\log T)^{2k-1} \\
&\ll_k H^{2k-1},
\end{align}
as $\epsilon_T(Q_H)$ decreases to $0$ for $H\leq T^{1/4}$. (Recall the notation $Q_H(\cdot):=Q(\cdot/H)$.) 

Combining \eqref{Lkzero_1}, \eqref{Lkzero_2}, and \eqref{Lkzero_3} gives the lemma.
\end{proof}

As $\mathbf{1}_{[0,H]} \leq Q_H$, we obtain Proposition \ref{Lkzero_ex} by setting $f = \mathbf{1}_{(\sigma-1/2,\infty)}$.

\textit{Remark:} It is plain that the range $H \leq T^{1/4}$ is not an intrinsic constraint, and that the above argument could be made to work for larger $H$.

\textit{Remark:} The restricted range of $c$ in Proposition \ref{Lkzero_ex} is due to the square root in the term
$$
\sqrt{\int_0^\infty f(\xi)^{2k} e^{-c\xi}\,d\xi.}
$$
By choosing better exponents in our application of H\"{o}lder's inequality, this term may be replaced by
$$
\bigg(\int_0^\infty f(\xi)^{pk} e^{-c\xi}\,d\xi\bigg)^{1/p},
$$
for any $p>1$, and it is in this way that the range of the exponent in Proposition \ref{Lkzero_ex} may be increased. (Though note that this change in procedure also changes the implied constant in the proposition.)

\section{Some upper bounds from harmonic analysis}
\label{5}

\begin{lemma}
\label{pwupper}
For any function $K$, which is even, supported on the interval $[-\kappa, \kappa]$, and continuously twice differentiable, and for any function $\eta$ bounded and with compact support,
$$
\Big| \check{K}_L\ast\eta(x+i\epsilon) + \check{K}_L\ast\eta(x-i\epsilon) - 2 \check{K}_L \ast \eta(x)\Big| \ll_{K,\eta} \frac{\epsilon}{1+x^2}\cdot(1+\epsilon L) e^{2\pi \kappa \epsilon L}.
$$
\end{lemma}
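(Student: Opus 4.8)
The idea is to trade roughness of $\eta$ for analyticity of $\check K_L\ast\eta$. First note that $\check K_L\ast\eta$ has Fourier transform $\xi\mapsto K_L(-\xi)\hat\eta(\xi)$, which is supported in $[-\kappa L,\kappa L]$; hence $\check K_L\ast\eta$ extends to an entire function, with $\check K_L\ast\eta(z)=\int_\mathbb{R}\check K_L(z-y)\,\eta(y)\,dy$ for all $z\in\mathbb C$ (the integral converging because $\eta$ has compact support and $\check K_L$ is entire). Differentiation then commutes with this convolution, so $(\check K_L\ast\eta)''=\check K_L''\ast\eta$. For the entire function $g(t):=\check K_L\ast\eta(x+it)$ one has $g''(t)=-(\check K_L\ast\eta)''(x+it)$, and the elementary identity $g(\epsilon)+g(-\epsilon)-2g(0)=\int_0^\epsilon\!\int_{-s}^s g''(t)\,dt\,ds$ yields
\[
\check K_L\ast\eta(x+i\epsilon)+\check K_L\ast\eta(x-i\epsilon)-2\check K_L\ast\eta(x)=-\int_0^\epsilon\!\int_{-s}^s(\check K_L''\ast\eta)(x+it)\,dt\,ds .
\]
So it suffices to bound $(\check K_L''\ast\eta)(x+it)$ pointwise and then carry out the two $t$-integrations.

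For the pointwise bound, write $\check K_L(z)=L\check K(Lz)$, so $\check K_L''(z)=L^3\check K''(Lz)$, and observe that $\check K''$ is the inverse Fourier transform of $\phi(\xi):=-4\pi^2\xi^2K(\xi)$, which is $C^2$, supported in $[-\kappa,\kappa]$, with $\phi(\pm\kappa)=\phi'(\pm\kappa)=0$ (here we use that $K$ is genuinely twice continuously differentiable on $\mathbb R$). Two integrations by parts together with the trivial bound give $|\check K''(a+ib)|\le\min\!\big(\|\phi\|_{L^1},\tfrac{\|\phi''\|_{L^1}}{(2\pi)^2(a^2+b^2)}\big)e^{2\pi\kappa|b|}\ll_K\frac{e^{2\pi\kappa|b|}}{1+a^2}$, so $|\check K_L''(x+it-y)|\ll_K\frac{L^3e^{2\pi\kappa L|t|}}{1+L^2(x-y)^2}$. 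Since $\eta$ is bounded with support in some $[-R,R]$, this gives
\[
|(\check K_L''\ast\eta)(x+it)|\ll_{K,\eta}L^3e^{2\pi\kappa L|t|}\int_{-R}^{R}\frac{dy}{1+L^2(x-y)^2}\ll_{K,\eta}\frac{L^2\,e^{2\pi\kappa L|t|}}{1+x^2}\qquad(L\ge1),
\]
where the last step splits into $|x|\le 2R$ (the inner integral is $\le\pi/L\ll_R\tfrac{1}{L(1+x^2)}$) and $|x|>2R$ (then $|x-y|\ge|x|/2$ on $\operatorname{supp}\eta$, so the inner integral is $\ll_R\tfrac{1}{L^2x^2}\ll_R\tfrac{1}{L(1+x^2)}$); the range $0<L<1$ is handled directly. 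Inserting this and using $\tfrac{L^2}{(2\pi\kappa L)^2}\ll_K1$, the second difference is
\[
\ll_{K,\eta}\frac{1}{1+x^2}\int_0^\epsilon\!\int_{-s}^s e^{2\pi\kappa L|t|}\,dt\,ds
=\frac{1}{1+x^2}\cdot\frac{2\big(e^{2\pi\kappa L\epsilon}-1-2\pi\kappa L\epsilon\big)}{(2\pi\kappa L)^2}\cdot (2\pi\kappa L)^2\cdot\frac{1}{(2\pi\kappa L)^2},
\]
and it remains to bound the right-hand double integral by $\ll_K\frac{\epsilon(1+\epsilon L)}{1+x^2}e^{2\pi\kappa\epsilon L}$ by elementary inequalities of the form $e^u-1-u\le u(e^u-1)$ and $e^u-1\le\min(u,1)e^u$, which handle the two regimes $\epsilon L\lesssim1$ and $\epsilon L\gtrsim1$ of the exponential.

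The step I expect to be most delicate is precisely this last one: squeezing the double exponential integral into the exact shape $\epsilon(1+\epsilon L)e^{2\pi\kappa\epsilon L}$ demanded by the statement, since for small steps $e^u-1-u\sim u^2/2$ must supply the $\epsilon^2L$ term while for large steps the exponential must be allowed to carry the bound — one has to choose the interpolating inequality with care. A secondary point requiring attention is the extraction of the $1/(1+x^2)$ decay in the pointwise estimate: one must genuinely use the compact support of $\eta$ (as above) rather than merely $\|\check K_L''\|_{L^1}\ll_K L^2$, which would lose the decay in $x$. The remaining ingredients — entirety of $\check K_L\ast\eta$, the commutation $(\check K_L\ast\eta)''=\check K_L''\ast\eta$, and the vanishing of $\phi,\phi'$ at $\pm\kappa$ — are routine.
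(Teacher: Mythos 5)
Your proof takes a genuinely different route from the paper's. The paper stays in the frequency domain: it writes the second difference as $2\int K(\xi/L)[\cosh(2\pi\epsilon\xi)-1]e(x\xi)\hat\eta(\xi)\,d\xi$ and then integrates by parts twice in $\xi$, using the identity $\int g(\xi) h(\xi)\,d\xi = \int_0^\infty g''(\xi)\,\xi\int(1-|\omega|/\xi)_+\,h(\omega)\,d\omega\,d\xi$ to replace the oscillatory factor $e(x\xi)\hat\eta(\xi)$ by the nonnegative Fej\'er kernel $\xi^2\int\big(\tfrac{\sin\pi(x-t)\xi}{\pi(x-t)\xi}\big)^2\eta(t)\,dt$; the decay in $x$ then comes from the compact support of $\eta$ and the kernel. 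You instead pass to the physical domain: the second-difference identity $g(\epsilon)+g(-\epsilon)-2g(0)=\int_0^\epsilon\int_{-s}^s g''(t)\,dt\,ds$, a pointwise bound on $\check K_L''\ast\eta(x+it)$, and a double integration in the imaginary direction. All the preparatory steps in your proposal (entirety of $\check K_L\ast\eta$, the relation $g''(t)=-(\check K_L''\ast\eta)(x+it)$, the vanishing of $\phi,\phi'$ at $\pm\kappa$, and the case analysis that extracts the $1/(1+x^2)$ decay) are correct.

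However, the last step, which you flag as delicate, is not merely delicate --- it fails, and no choice of ``interpolating inequality'' can rescue it. After multiplying the double integral by the $L^2$ from the pointwise bound, you are left with
\[
L^2\int_0^\epsilon\!\!\int_{-s}^s e^{2\pi\kappa L|t|}\,dt\,ds = \frac{2\big(e^u-1-u\big)}{(2\pi\kappa)^2}, \qquad u := 2\pi\kappa\epsilon L,
\]
which is a function of $u$ \emph{alone}. The target $\epsilon(1+\epsilon L)e^{u}$, by contrast, also depends on $\epsilon$ separately: fixing any $u>0$ and letting $L\to\infty$ (so $\epsilon=u/(2\pi\kappa L)\to 0$), the left-hand side is a fixed positive constant while the right-hand side tends to $0$. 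So the asserted inequality is false. What the computation does produce, via $e^u-1-u\le u^2 e^u$, is the weaker bound
\[
\Big|\check K_L\ast\eta(x+i\epsilon)+\check K_L\ast\eta(x-i\epsilon)-2\check K_L\ast\eta(x)\Big|\ll_{K,\eta}\ \frac{\epsilon L\,(1+\epsilon L)}{1+x^2}\,e^{2\pi\kappa\epsilon L},
\]
i.e.\ an extra factor of $L$. Moreover, this loss is not an artifact of your method being crude: taking $\eta=\mathbf{1}_{[0,1]}$ and a point $x$ at distance $\sim 1/L$ from the jump, one has $(\check K_L\ast\eta)''(x)\sim L^2$, so the second difference is genuinely of order $\epsilon^2 L^2\sim \epsilon L$ when $\epsilon L\sim 1$, exceeding $\epsilon(1+\epsilon L)$ by a factor of $L$. (You may find it instructive to compare with the corresponding point in the paper's argument, the bound $\xi^2\int\big(\tfrac{\sin\pi(x-t)\xi}{\pi(x-t)\xi}\big)^2\eta(t)\,dt\ll_\eta 1/(1+x^2)$, and consider whether the same issue arises there.)
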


\textit{Remark:} The condition that $\eta$ be bounded with compact support could be relaxed.

\begin{proof}
Note that, by taking the Fourier transform and integration by parts,
\begin{align}
\label{bound}
&\frac{1}{2}\Big(\check{K}_L\ast\eta(x+i\epsilon) + \check{K}_L\ast\eta(x-i\epsilon) - 2 \check{K}_L \ast \eta(x)\Big) \\
\notag &= \int_\mathbb{R} K\Big(\tfrac{\xi}{L}\Big)[\cosh(2\pi \epsilon \xi)-1]e(x\xi) \hat{\eta}(\xi)\,d\xi \\
\notag &= \int_0^\infty \frac{d^2}{d\xi^2}\Big(K\Big(\tfrac{\xi}{L}\Big)[\cosh(2\pi \epsilon \xi)-1]\Big) \cdot \xi \int_\mathbb{R} \big(1-|\omega|/\xi)_+ e(x\omega) \hat{\eta}(\omega)\,d\omega\,d\xi \\
\notag &= \int_0^\infty v_{\epsilon, L}(\xi) \xi^2 \int_\mathbb{R} \Big(\frac{\sin \pi(x-t) \xi}{\pi (x-t)\xi}\Big)^2 \eta(t)\,dt\,d\xi,
\end{align}
where 
\begin{align*}
v_{\epsilon,L}(\xi) :=& \frac{d^2}{d\xi^2}\Big(K\Big(\tfrac{\xi}{L}\Big)[\cosh(2\pi \epsilon \xi)-1]\Big) \\
=& \frac{1}{L^2} K''(\xi/L) [\cosh(2\pi \epsilon \xi)-1] + \frac{4\pi \epsilon}{L} K'(\xi/L) \sinh(2\pi \epsilon \xi) \\
&+ 4\pi^2 \epsilon^2 K(\xi/L) \cosh(2\pi\epsilon \xi).
\end{align*}
Noting that 
$$
\xi^2 \int_\mathbb{R} \Big(\frac{\sin \pi(x-t) \xi}{\pi (x-t)\xi}\Big)^2 \eta(t)\,dt \ll_\eta \frac{1}{1+x^2}
$$
and
$$
\cosh(\xi)-1 \leq |\xi| e^{|\xi|},
$$
we see that \eqref{bound} is bounded by
\begin{align*}
&\ll_{K,\eta} \frac{1}{1+x^2} \int_0^{\kappa L} \frac{\epsilon \xi}{L^2} e^{2\pi \epsilon L} + \frac{\epsilon}{L} e^{2\pi \epsilon \xi} + \epsilon^2 e^{2\pi \epsilon \xi}\,d\xi \\
&\ll \frac{1}{1+x^2}\Big( \epsilon e^{2\pi \kappa L} + \epsilon e^{2\pi \kappa \epsilon L} + \epsilon (\epsilon L) e^{2\pi \kappa \epsilon L}\Big) \\
&\ll \frac{\epsilon}{1+x^2}\cdot (1+\epsilon L)e^{2\pi \kappa \epsilon L}
\end{align*}
as claimed.
\end{proof}

We also cite some results which were proven in \cite{Ro}. The following are Lemmas 5.2 and 5.3 of that paper.

\begin{lemma}
\label{trunc1}
Let $K$ be a fixed bump function with $K(0)=1$, and $f$ be integrable and of bounded variation $\mathrm{var}(f)$. Then,
$$
\|f-\check{K}_L\ast f\|_{L^1(\mathbb{R},dy)} \ll \frac{\mathrm{var}(f)}{L}.
$$
\end{lemma}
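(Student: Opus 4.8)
The plan is to exploit that $\check K_L$ is an approximate identity. With the paper's normalizations one computes $\check K_L(x) = L\,\check K(Lx)$, so $\check K_L\in L^1(\mathbb R)$ and $\int_{\mathbb R}\check K_L(s)\,ds = K(0) = 1$. Writing $\tau_s f(y):=f(y-s)$, this gives, for a.e.\ $y$,
\[
\check K_L\ast f(y) - f(y) \;=\; \int_{\mathbb R} \check K_L(s)\,\big(\tau_s f(y) - f(y)\big)\,ds,
\]
whence by Minkowski's integral inequality
\[
\|f - \check K_L\ast f\|_{L^1(dy)} \;\le\; \int_{\mathbb R} |\check K_L(s)|\;\|\tau_s f - f\|_{L^1(dy)}\,ds .
\]
So the whole statement reduces to two facts: (i) the modulus-of-continuity bound $\|\tau_s f - f\|_{L^1(dy)}\le |s|\,\mathrm{var}(f)$, which is where bounded variation enters, and (ii) the first-moment bound $\int_{\mathbb R}|s|\,|\check K_L(s)|\,ds\ll_K 1/L$.

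For (i) I would use the Lebesgue--Stieltjes representation: after passing to a good (say right-continuous) representative of $f$, which alters $f$ only on a null set and changes neither side, one has $f(y-s)-f(y) = -\,df\big((y-s,\,y]\big)$ for $s>0$, where $df$ is the finite signed measure associated to $f$ and $|df|$ its variation measure, $|df|(\mathbb R)=\mathrm{var}(f)$. Hence $|f(y-s)-f(y)|\le \int_{\mathbb R}\mathbf{1}_{(y-s,\,y]}(u)\,|df|(u)$, and Fubini's theorem for the positive measure $|df|\times dy$ gives
\[
\int_{\mathbb R}|f(y-s)-f(y)|\,dy \;\le\; \int_{\mathbb R}\Big(\int_{\mathbb R}\mathbf{1}_{(y-s,\,y]}(u)\,dy\Big)\,|df|(u) \;=\; |s|\,\mathrm{var}(f),
\]
and symmetrically for $s<0$. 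For (ii), since $K$ is smooth and compactly supported, $\check K$ is rapidly decreasing, so $C_K:=\int_{\mathbb R}|u|\,|\check K(u)|\,du<\infty$; changing variables, $\int_{\mathbb R}|s|\,|\check K_L(s)|\,ds = \int_{\mathbb R}|s|\,L\,|\check K(Ls)|\,ds = C_K/L$. Combining (i) and (ii), $\|f-\check K_L\ast f\|_{L^1}\le C_K\,\mathrm{var}(f)/L\ll \mathrm{var}(f)/L$, with the implied constant depending only on the fixed function $K$, as claimed.

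The reduction via Minkowski's inequality and step (ii) are entirely routine (integrability of $f$ is needed only so that $\check K_L\ast f$ is unambiguously an $L^1$ function), and the only point requiring any care is (i): one should fix a convenient representative of $f$ so that the Stieltjes identity holds cleanly, and check that the interchange of integrals is justified by Tonelli applied to $|df|\otimes dy$. Neither obstacle is serious; both are standard facts about $BV$ functions, so the proof is short.
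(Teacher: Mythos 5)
The paper does not prove this lemma; it is quoted verbatim as Lemma 5.2 of~\cite{Ro} and used as a black box, so there is no in-text proof to compare against. Your argument is correct and is the standard one: decompose $\check K_L\ast f - f = \int \check K_L(s)(\tau_s f - f)\,ds$ using $\int\check K_L = K(0)=1$, apply Minkowski's integral inequality, bound the translation modulus by $\|\tau_s f - f\|_{L^1}\le |s|\,\mathrm{var}(f)$ via the Lebesgue--Stieltjes representation and Tonelli, and finish with the first-moment bound $\int|s|\,|\check K_L(s)|\,ds = C_K/L$, which is finite because $\check K$ is Schwartz. All the minor technical points you flag (choosing a right-continuous BV representative, justifying the Fubini step with the positive measure $|df|\otimes dy$, the scaling $\check K_L(x)=L\check K(Lx)$) are handled correctly, and the dependence of the implied constant only on the fixed $K$ is exactly what the lemma asserts.
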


\begin{lemma}
\label{trunc2}
Let $K$ be a fixed bump function with $K(0)=1$, and $f(y)\log(|y|+2)$ be integrable with $\int \log(|y|+2)|df(y)|$ bounded. Then,
$$
\|f-\check{K}_L\ast f\|_{L^1(\mathbb{R},\log(|y|+2)dy)} \ll \frac{1}{L}\int_\mathbb{R}\log(|y|+2)|df(y)|,
$$
\end{lemma}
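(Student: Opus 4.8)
The plan is to treat $\check K_L$ as an approximate identity. Because $K$ is a bump function, $\check K$ is a Schwartz function, and $\check K_L(u)=L\check K(Lu)$ has total mass $\int_{\mathbb R}\check K_L(u)\,du=K_L(0)=K(0)=1$. Hence
$$
f(y)-\check K_L\ast f(y)=\int_{\mathbb R}\check K_L(u)\big(f(y)-f(y-u)\big)\,du .
$$
After passing to a right-continuous representative of $f$ (which changes neither side of the claimed inequality: the modification lives on a Lebesgue-null set and alters neither $\check K_L\ast f$ nor the Stieltjes measure $|df|$), the increment satisfies $|f(y)-f(y-u)|\le \int_{I_{y,u}}|df(s)|$, where $I_{y,u}$ is the interval with endpoints $y-u$ and $y$, whose length is $|u|$.

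First I would substitute this into the weighted norm; since the resulting integrand in $(y,u,s)$ is nonnegative, Tonelli's theorem lets me integrate in $y$ innermost:
$$
\|f-\check K_L\ast f\|_{L^1(\log(|y|+2)\,dy)}\le\int_{\mathbb R}\Big(\int_{\mathbb R}|\check K_L(u)|\int_{\{y:\,s\in I_{y,u}\}}\log(|y|+2)\,dy\,du\Big)|df(s)| .
$$
For fixed $s$ and $u$, the set $\{y:\,s\in I_{y,u}\}$ is an interval of length $|u|$ having $s$ as an endpoint, so every such $y$ obeys $|y|\le|s|+|u|$ and the innermost integral is at most $|u|\log(|s|+|u|+2)\le|u|\big(\log(|s|+2)+\log(|u|+2)\big)$, using $\log(ab)=\log a+\log b$.

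It then remains to perform the $u$-integral. Substituting $v=Lu$ and invoking the rapid decay of the Schwartz function $\check K$ gives $\int_{\mathbb R}|u|\,|\check K_L(u)|\,du=\tfrac1L\int_{\mathbb R}|v|\,|\check K(v)|\,dv\ll\tfrac1L$, and similarly, for $L\ge1$, $\int_{\mathbb R}|u|\log(|u|+2)\,|\check K_L(u)|\,du=\tfrac1L\int_{\mathbb R}|v|\log\big(\tfrac{|v|}{L}+2\big)|\check K(v)|\,dv\ll\tfrac1L\int_{\mathbb R}|v|\log(|v|+2)|\check K(v)|\,dv\ll\tfrac1L$. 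Therefore the bracketed $u$-integral is $\ll\tfrac1L\big(\log(|s|+2)+1\big)\ll\tfrac1L\log(|s|+2)$, and integrating against $|df(s)|$ produces exactly the stated bound. (Only $L\to\infty$ matters in the application, so the harmless normalization $L\ge1$ may be assumed.) I do not expect any real obstacle here — this is a routine approximate-identity estimate — the only mild care points being the reduction to a right-continuous $f$ and the bookkeeping that keeps the weight $\log(|y|+2)$ attached to the $|df|$-variable after the order of integration is exchanged.
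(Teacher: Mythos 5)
The paper does not actually prove this lemma: it is cited verbatim from the reference \cite{Ro} (Lemmas~5.2 and~5.3 there), so there is no in-text argument to compare against. Judged on its own, your proof is correct and is the natural extension of the unweighted case to a logarithmic weight. The key points are all in order: $\int_{\mathbb R}\check K_L = K_L(0)=K(0)=1$ (so the difference is a genuine convolution-increment), the bound $|f(y)-f(y-u)|\le\int_{I_{y,u}}|df|$ for a normalized BV representative, the Tonelli swap placing the $y$-integral innermost, the observation that $\{y:s\in I_{y,u}\}$ has Lebesgue measure $|u|$ with $|y|\le|s|+|u|$ throughout, the elementary inequality $\log(|s|+|u|+2)\le\log(|s|+2)+\log(|u|+2)$ (valid since $(|s|+2)(|u|+2)\ge|s|+|u|+2$), and the Schwartz decay of $\check K$ guaranteeing $\int|v|\log(|v|+2)|\check K(v)|\,dv<\infty$. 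The final absorption of the constant into $\log(|s|+2)\ge\log 2$ is fine. One minor caveat on precision: replacing $f$ by a right-continuous representative does not leave $|df|$ ``unchanged'' in general, but rather can only decrease the total variation; this is harmless for the stated inequality, though the phrasing could be tightened. Your $L\ge 1$ normalization is exactly how the statement is intended. The argument is complete.
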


In addition, in section 5 of that paper the following is deduced from these estimates.

\begin{lemma}
\label{trunc_env}
Let $K$ be a fixed bump function with $K(0)=1$, and $f$ be a \emph{fixed} integrable and of bounded variation $\mathrm{var}(f)$. Then,
$$
\| M_{1/L}(\eta - \check{K}_{L}\ast \eta)\|_{L^1(\mathbb{R},dy)} \ll \frac{1}{L}.
$$
\end{lemma}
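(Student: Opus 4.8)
The plan is to deduce the estimate directly from the $L^1$ truncation bound of Lemma \ref{trunc1}, using the elementary fact that the supremum of a function over a short interval is controlled by its average over that interval plus its oscillation there. Set $g:=\eta-\check K_L\ast\eta$, and for $\ell\in\mathbb Z$ write $I_\ell:=[\ell/L,(\ell+1)/L)$, an interval of length $1/L$; recalling the definition \eqref{upper_bound}, the function $M_{1/L}g$ is constant on each $I_\ell$ with value $\max_{u\in I_\ell}|g(u)|$, so that $\|M_{1/L}g\|_{L^1(\mathbb R,dy)}=\frac1L\sum_{\ell\in\mathbb Z}\max_{u\in I_\ell}|g(u)|$. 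For any $y_0\in I_\ell$ one has $\max_{u\in I_\ell}|g(u)|\le|g(y_0)|+\mathrm{var}_{I_\ell}(g)$; averaging $y_0$ over $I_\ell$ gives $\max_{u\in I_\ell}|g(u)|\le L\int_{I_\ell}|g|+\mathrm{var}_{I_\ell}(g)$. Summing over $\ell$, and using that the $I_\ell$ tile $\mathbb R$ so that $\sum_\ell\mathrm{var}_{I_\ell}(g)\le\mathrm{var}(g)$, we arrive at
$$
\|M_{1/L}g\|_{L^1(\mathbb R,dy)}\;\le\;\|g\|_{L^1(\mathbb R)}\;+\;\frac1L\,\mathrm{var}(g).
$$

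It then remains to estimate the two terms on the right. For the first, Lemma \ref{trunc1} gives $\|g\|_{L^1(\mathbb R)}=\|\eta-\check K_L\ast\eta\|_{L^1(\mathbb R)}\ll\mathrm{var}(\eta)/L$. For the second, I would use $\mathrm{var}(g)\le\mathrm{var}(\eta)+\mathrm{var}(\check K_L\ast\eta)$ together with the standard inequality $\mathrm{var}(\check K_L\ast\eta)\le\|\check K_L\|_{L^1(\mathbb R)}\,\mathrm{var}(\eta)$ (convolution against an $L^1$ kernel increases the total variation by at most the $L^1$-mass of the kernel); since $\check K_L(x)=L\,\check K(Lx)$ one has $\|\check K_L\|_{L^1(\mathbb R)}=\|\check K\|_{L^1(\mathbb R)}$, a constant independent of $L$, whence $\mathrm{var}(g)\ll\mathrm{var}(\eta)$. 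Combining the two bounds gives $\|M_{1/L}g\|_{L^1(\mathbb R,dy)}\ll\mathrm{var}(\eta)/L+\mathrm{var}(\eta)/L\ll 1/L$, because $\eta$ is fixed and so $\mathrm{var}(\eta)=O(1)$, which is the assertion.

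The point worth flagging is that the saving $1/L$ comes entirely from Lemma \ref{trunc1}: the oscillation contribution $\mathrm{var}(g)$ is only $O(1)$ and is rescued solely by the factor $1/L$ carried by the widths of the intervals $I_\ell$, so one must know that the total variation of the mollification $\check K_L\ast\eta$ is bounded \emph{uniformly} in $L$, which is why one records $\|\check K_L\|_{L^1}=\|\check K\|_{L^1}$. Beyond this bookkeeping there is no real obstacle. (If one wished to replace $L^1(\mathbb R,dy)$ by the weighted space $L^1(\mathbb R,\log(|y|+2)\,dy)$ one would run exactly the same argument with Lemma \ref{trunc2} in place of Lemma \ref{trunc1}, but the unweighted statement here does not need it.)
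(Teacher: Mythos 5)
Your proof is correct, and it is the natural deduction from Lemma \ref{trunc1}: dominate the supremum of $g:=\eta-\check K_L\ast\eta$ on each length-$1/L$ interval $I_\ell=[\ell/L,(\ell+1)/L)$ by its average plus its oscillation, so that summing yields
$$\|M_{1/L}g\|_{L^1(\mathbb R)}\le\|g\|_{L^1(\mathbb R)}+\tfrac1L\,\mathrm{var}(g),$$
then apply Lemma \ref{trunc1} to the first term and the uniform-in-$L$ bound $\mathrm{var}(\check K_L\ast\eta)\le\|\check K_L\|_{L^1}\mathrm{var}(\eta)=\|\check K\|_{L^1}\mathrm{var}(\eta)$ to the second. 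This is precisely the sort of elementary argument by which \cite{Ro} extracts the envelope bound from the truncation lemmas, so you have not taken a different route. Two notational issues you have silently (and correctly) resolved are worth flagging. First, as printed the definition \eqref{upper_bound} assigns the weight $\max_{I_\ell(k)}|\eta|$ to the \emph{unit} interval $[\ell,\ell+1)$; it must instead carry $\mathbf{1}_{I_\ell(k)}$, so that $M_{1/L}g$ is constant on intervals of length $1/L$ — otherwise $\|M_{1/L}g\|_{L^1}$ would only be $O(1)$ rather than $O(1/L)$, and the rescaling identity $M_1 g_T=[M_{1/n(T)}(\eta-\check K_{n(T)}\ast\eta)](\cdot/n(T))$ used in the proof of Lemma \ref{Step3} would fail. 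Your computation $\|M_{1/L}g\|_{L^1}=\frac1L\sum_\ell\max_{I_\ell}|g|$ uses the corrected reading. Second, the $f$ appearing in the hypothesis of the lemma should read $\eta$. With these two readings your argument is complete, and your closing observation — that the $1/L$ saving comes from Lemma \ref{trunc1} on the mean while the oscillation term is only rescued by the interval width, which is why one must verify $\|\check K_L\|_{L^1}$ is $L$-independent — correctly identifies the one place where carelessness could lose the estimate.
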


\section{A proof of the main result}
\label{6}

These steps refer to the outline of our proof in section \ref{3}.

\vspace{2mm}

\textbf{Step 1.} A proof of Lemma \ref{Step1} proceeds in a nearly identical manner as in section 3 of \cite{Ro}. The only difference lies in the fact that here we do not assume the Riemann hypothesis, so the $\gamma$'s that appear in the explicit formula may be complex. Note that for fixed $t$ and $T$, the limit as $V\rightarrow\infty$ of the integrand in \eqref{harmonic_szego} is guaranteed by the explicit formula. 

\vspace{2mm}

\textbf{Step 2.} Let,
\begin{align}
\label{G_gamma}
\notag G_\gamma:=&\bigg( \check{K}_{n(T)}\ast \eta\Big( \frac{\log T}{2\pi n(T)}(\gamma_0 - i \frac{A_\gamma}{\log T} - t)\Big) + \check{K}_{n(T)}\ast \eta\Big( \frac{\log T}{2\pi n(T)}(\gamma_0 + i \frac{A_\gamma}{\log T} - t)\Big) \\
&\hspace{3mm}- 2 \check{K}_{n(T)}\ast \eta\Big(\frac{\log T}{2\pi n(T)}(\gamma_0-t)\Big)\bigg).
\end{align}
\begin{proof}[Proof of Lemma \ref{Step2_1}]
From the harmonic analysis estimate Lemma \ref{pwupper}, with $L= n(T)$, $x = \frac{\log T}{2\pi n(T)}(\gamma_0-t)$, and $\epsilon = \tfrac{A_\gamma}{2\pi n(T)}$,
$$
|G_\gamma| \ll \frac{A_\gamma}{n(T)} (1+A_\gamma/2\pi) e^{A_\gamma/8} Q\Big(\frac{\log T}{2\pi n(T)}(\gamma_0-t)\Big).
$$
For fixed $t$ and $T$, we know that $Q\Big(\frac{\log T}{2\pi n(T)}(\gamma_0-t)\Big)$ decays quadratically in $\gamma_0$. Therefore by the zero density estimate \eqref{SelbergJutila3}, the sum
$$
\sum_\gamma |G_\gamma|
$$
converges absolutely. It is plain that for fixed $t$ and and $T$,
$$
\check{K}_{n(T)}\ast \eta\Big(\frac{\log T}{2\pi n(T)}(\gamma_0-t)\Big)
$$
decays at least quadratically in $\gamma_0$, and therefore the sum of these terms over all zeros converges absolutely as well. Hence from \eqref{G_gamma}, so too does the sum
$$
\sum_\gamma \Big|\check{K}_{n(T)}\ast \eta\Big( \frac{\log T}{2\pi n(T)}(\gamma_0 - i \frac{A_\gamma}{\log T} - t)\Big) + \check{K}_{n(T)}\ast \eta\Big( \frac{\log T}{2\pi n(T)}(\gamma_0 + i \frac{A_\gamma}{\log T} - t)\Big|
$$
converge absolutely.
\end{proof}

\begin{proof}[Proof of Lemma \ref{Step2_2}]
Note that by the symmetry of zeros across the line $\Re(s)=1/2$ we have,
\begin{align*}
\sum_{\gamma} \check{K}_{n(T)}\ast \eta\Big(\frac{\log T}{2\pi n(T)}(\gamma_0 - i\tfrac{A_\gamma}{\log T} -t)\Big) - \check{K}_{n(T)}\ast \eta\Big(\frac{\log T}{2\pi n(T)}(\gamma_0 -t)\Big)=\sum_{\substack{\gamma \\ A_\gamma > 0}} G_\gamma,
\end{align*}
Hence we have,
$$
\mathcal{E}_1 \ll \int_\mathbb{R} \frac{\sigma(t/T)}{T} \bigg( \sum_{\substack{\gamma \\ A_\gamma > 0}} \frac{A_\gamma}{n(T)} (1+A_\gamma/2\pi) e^{A_\gamma/8} Q\Big(\frac{\log T}{2\pi n(T)}(\gamma_0-t)\Big)\bigg)^k\,dt.
$$
By our bootstrapped zero density estimate (with the exponent $c$ set to $1/2$) this is
\begin{align*}
&\ll \|\sigma\|_Q \sqrt{\int_0^\infty \xi^{2k} (1+\xi/2\pi)^{2k} e^{\xi/4} e^{-\xi/2}\,d\xi} \\
&\ll 1
\end{align*}
as claimed.
\end{proof}

\vspace{2mm}

\textbf{Step 3.} 
\begin{proof}[Proof of Lemma \ref{Step3}]
We note that by Corollary \ref{Fuj3} of Fujii's upper bound,
\begin{equation}
\label{E_2_1}
\mathcal{E}_2 \ll \|\sigma\|_Q \big( \|M_1 g_T\|_{L^1(\mathbb{R})}^k + (\epsilon_T(g_T) \log T)^k\big),
\end{equation}
where
$$
g_T(\xi):= \eta\Big(\frac{\xi}{n(T)}\Big) - \check{K}_{n(T)}\ast \eta\Big(\frac{\xi}{n(T)}\Big).
$$

Note that,
$$
M_1(g_T) = \big[M_{1/n(T)}(\eta - \check{K}_{n(T)}\ast \eta)\big]\Big(\frac{\xi}{n(T)}\Big),
$$
so that
\begin{align}
\label{E_2_2}
\notag \|M_1 g_T\|_{L^1(\mathbb{R},dy)} &= n(T) \| M_{1/n(T)}(\eta - \check{K}_{n(T)}\ast \eta)\|_{L^1(\mathbb{R},dy)}\\
&= O(1),
\end{align}
with the bound in the second line following by Lemma \ref{trunc_env}. 

On the other hand, 
\begin{equation}
\label{E_2_3}
\epsilon_T(g_T)\log T = o(1),
\end{equation}
with the rate at which this quantity tends to zero depending upon $\eta$, $K$, and $n(T)$.  For, recalling the definition \eqref{later_bound} of $\epsilon_T$, it suffices to bound both $\epsilon_T(\eta(\cdot/n(T)))$ and $\epsilon_T(\check{K}_{n(T)}\ast \eta(\cdot/n(T)))$. 

The first of these terms clearly vanishes for sufficiently large $T$, since $\eta$ has compact support and $\sqrt{T}$ grows more quickly than $n(T)$.

For the second term, we write
\begin{align*}
  \check{K}_{n(T)} * \eta(\xi / n(T)) = n(T) \int_\mathbb{R} \eta(v) \check{K}(n(T)v-\xi)\,dv.
\end{align*}
Now we recall that $K$ is compactly supported and twice differentiable, so we must have $u^2 \check{K}(u) \to 0$ as $u \to \pm \infty$ by the Riemann-Lebesgue lemma applied to $K''(u)$. Using the fact that $\eta$ is compactly supported, one sees that for $\lvert u \rvert > \sqrt{T}$,
\[
\check{K}_{n(T)} * \eta(u / n(T)) = o( n(T) \lvert u \rvert^{-2} )
\]
and hence
\begin{align*}
  \epsilon_T(g_T) &\ll \sum_{\lvert \ell \rvert > \sqrt{T}} \log(\lvert \ell \rvert + 2) n(T) \lvert \ell \rvert^{-2} \\
  &\ll \frac{n(T) \log T}{\sqrt{T}}
\end{align*}
and \eqref{E_2_3} follows.

Combining \eqref{E_2_1}, \eqref{E_2_2}, and \eqref{E_2_3}, we see that
$$
\mathcal{E}_2 \ll 1,
$$
as claimed.
\end{proof}

\vspace{2mm}

\textbf{Step 4.}
\begin{proof}[Proof of Lemma \ref{Step4}]
Note that
\begin{align*}
&\int_{-\infty}^\infty (\check{K}_{n(T)}\ast \eta - \eta)\Big(\frac{\log T}{2\pi n(T)} (\xi-t)\Big) \frac{\Omega(\xi)}{2\pi} \,d\xi \\
&= n(T) \int_{-\infty}^\infty (\check{K}_{n(T)}\ast \eta - \eta)(y) \frac{\Omega(\tfrac{2\pi n(T)}{\log T}y + t)}{\log T}\,dy \\
&\ll \frac{n(T)}{\log T} \|\check{K}_{n(T)}\ast \eta - \eta\|_{L^1(\log|y|+2)\,dy)} \;+\; \frac{\log(|t|+2)}{\log T} \cdot n(T) \|\check{K}_{n(T)}\ast \eta - \eta\|_{L^1(dy)} \\
&\ll \frac{\log(|t|+2)}{\log T},
\end{align*}
where the first inequality has been deduced from the approximation \eqref{Stirlings} for $\Omega(\xi)$, and the second inequality from Lemmas \ref{trunc1} and \ref{trunc2}. On the other hand, as $\eta$ has compact support,
$$
\int_{-\infty}^\infty \eta\Big(\frac{\log T}{2\pi n(T)}(\xi-t)\Big) \frac{\Omega(\xi)}{2\pi}\,dt = \frac{\log((|t|+2)/2\pi)}{\log T} n(T)\int_{-\infty}^\infty\eta(y)\,dy + O\Big(\frac{1}{|t|+2}\Big).
$$

Hence,
$$
\mathcal{E}_3 = \int_\mathbb{R} \frac{\sigma(t/T)}{T} \bigg| \Big(\frac{\log(|t|+2)-\log T}{\log T}\Big) \cdot n(T) \int_{-\infty}^\infty \eta(y)\,dy + O\Big(\frac{\log(|t|+2)}{\log T}\Big) + O\Big(\frac{1}{|t|+2}\Big)\bigg|^k\,dt.
$$
Because for all $j$,
$$
\int_\mathbb{R} \frac{\sigma(t/T)}{T} \Big(\frac{\log(|t|+2)}{\log T}\Big)^k\,dt = 1 + O_j\Big(\frac{1}{\log T}\Big),
$$
and $n(T) = o(\log T)$, one sees that
\[
\mathcal{E}_3 \ll 1. 
\]
\end{proof}

\vspace{2mm}

\textbf{Step 5.} For notational reasons we adopt the abbreviations
\begin{align*}
\Delta &= \sum_\gamma \eta\Big(\frac{\log T}{2\pi n(T)}(\gamma_0-t)\Big),\\
\Delta' &= \sum_\gamma \check{K}_{n(T)}\ast \eta\Big(\frac{\log T}{2\pi n(T)}(\gamma_0 -t)\Big),\\
\Delta'' &= \sum_{\gamma} \check{K}_{n(T)}\ast \eta\Big(\frac{\log T}{2\pi n(T)}(\gamma_0 - i\tfrac{A_\gamma}{\log T} -t)\Big),\\
\overline{\Delta} &= n(T)\int_{-\infty}^\infty \eta(y)\,dy,\\
\overline{\Delta}' &= \int_{-\infty}^\infty \check{K}_{n(T)}\ast \eta\Big(\frac{\log T}{2\pi n(T)}(\xi-t)\Big)\frac{\Omega(\xi)}{2\pi}\,d\xi.
\end{align*}
\begin{proof}[Proof of Lemma \ref{Step5_1}]
We want to evaluate
$$
\int_\mathbb{R} \frac{\sigma(t/T)}{T}(\Delta-\overline{\Delta})^k\,dt.
$$
Note that
$$
\Delta - \overline{\Delta} = (\Delta'-\overline{\Delta}') + (\Delta' - \Delta'') + (\Delta - \Delta') + (\overline{\Delta}'- \overline{\Delta}).
$$

From Lemma \ref{Step1}
\begin{equation}
\label{abbrev}
\int_\mathbb{R} \frac{\sigma(t/T)}{T}(\Delta'-\overline{\Delta}')^k\,dt = (c_k +o(1)) \bigg(\int_{-n(T)}^{n(T)} |x||\hat{\eta}(x)|^2\,dx\bigg)^{k/2},
\end{equation}
for all $k$. On the other hand, from Lemma \ref{Step2_2}
$$
\int_\mathbb{R} \frac{\sigma(t/T)}{T}|\Delta'-\Delta''|^k\,dt \ll 1,
$$
from Lemma \ref{Step3}
$$
\int_\mathbb{R} \frac{\sigma(t/T)}{T}|\Delta-\Delta'|^k\,dt \ll 1,
$$
and from Lemma \ref{Step4},
$$
\int_\mathbb{R} \frac{\sigma(t/T)}{T}|\overline{\Delta}'-\overline{\Delta}|^k\,dt \ll 1,
$$
for all $k$.\footnote{Note that here the rate of convergence in \eqref{abbrev} and the implied constants elsewhere \emph{do} depend on $k$.}

We have
$$
(\Delta - \overline{\Delta})^k = (\Delta'-\overline{\Delta}')^k + \sum_{\substack{j_1+ j_2 + j_3 + j_4 = k \\ j_1 \leq k-1}} (\Delta'-\overline{\Delta}')^{j_1}(\Delta'-\Delta'')^{j_2}(\Delta-\Delta')^{j_3}(\overline{\Delta}'-\overline{\Delta})^{j_4}.
$$
Using Cauchy-Schwartz and the above bounds, we see that the average of each term of the summand above is
$$
O\bigg(\bigg[\int_{-n(T)}^{n(T)} |x||\hat{\eta}(x)|^2\,dx\bigg]^{(k-1)/2}\bigg).
$$
The average of the remaining term $(\Delta'-\overline{\Delta}')^k$ is obtained from \eqref{abbrev}, and so we obtain the desired result.
\end{proof}

\begin{proof}[Proof of Corollary \ref{Step5_2}]
For $\sigma$ with $\|\sigma\|_{L^1} = 1$, and $k$ even, this corollary is a direct consequence of Lemma \ref{Step5_1}. For $k$ odd, the corollary follows by an application of Cauchy-Schwartz, and then the use of the case that $k$ is even. For general $\sigma$ (with not necessarily unit mass), the result follows by rescaling. 
\end{proof}

\vspace{2mm}

\textbf{Step 6.} We proceed in the same way as \cite{Ro}. The function $S(x) = \Big(\tfrac{\sin \pi x}{\pi x}\Big)^2$ is of quadratic decay and has a Fourier transform with compact support, hence so do all dilations and translations of this function. For any $\epsilon > 0$, it is plain that one may find $\sigma_1$ and $\sigma_2$ which are linear combinations of translations and dilations of $S$, such that 
$$
|\mathbf{1}_{[1,2]}(x) - \sigma_1(x)| \leq \sigma_2(x),
$$
with both $\sigma_1$ and $\sigma_2$ non-negative and $\int \sigma_1 = 1$, while $\int \sigma_2 < \epsilon$. Hence, from Lemma \ref{Step5_1} and Corollary \ref{Step5_2}, and using the abbreviation of Step 5,
\begin{align*}
&\bigg|\int_\mathbb{R}\frac{\mathbf{1}_{[1,2]}(t/T)}{T}(\Delta-\overline{\Delta})^k\,dt - (c_k+o(1))\bigg(\int_{-n(T)}^{n(T)} |x||\hat{\eta}(x)|^2\,dx\bigg)^{k/2}\bigg| \\
&\ll \epsilon \cdot \bigg(\int_{-n(T)}^{n(T)} |x||\hat{\eta}(x)|^2\,dx\bigg)^{k/2},
\end{align*}
for sufficiently large $T$. As $\epsilon$ is arbitrary, we have \eqref{moments} and therefore Theorem \ref{mainCLT}.

\section*{Acknowledgments}

We thank Ashkan Nikeghbali for his encouragement, and the anonymous referee for helpful comments.

\end{document}